\font\bbbld=msbm10 scaled\magstephalf
\def\c{\mathop{cap}}
\newcommand{\bC}{\hbox{\bbbld C}}
\newcommand{\bR}{\hbox{\bbbld R}}
\newcommand{\cE}{\mathcal{E}}
\newtheorem{theorem}{Theorem}[section]
\newtheorem{lemma}[theorem]{Lemma}
\newtheorem{proposition}[theorem]{Proposition}
\newtheorem{corollary}[theorem]{Corollary}
 \theoremstyle{definition}
\newtheorem{definition}[theorem]{Definition}
\theoremstyle{remark}
\newtheorem{remark}[theorem]{Remark}
\numberwithin{equation}{section}
\begin{document}
\setlength{\baselineskip}{1.2\baselineskip}

\title{Subsolution theorem  for the complex Hessian  equation}

\author{Ngoc Cuong Nguyen}
\address{Institute of Mathematics, Jagiellonian University,   \L ojasiewicza 6, 30-348 Krak\'ow, Poland. }
\email{Nguyen.Ngoc.Cuong@im.uj.edu.pl}

\begin{abstract}
We prove the subsolution theorem for the complex Hessian equation
in a smoothly bounded strongly $m$-pseudoconvex domain in $\bC^n$.

\end{abstract}

\maketitle

\bigskip

\section*{Introduction}
\label{Intro}
\setcounter{equation}{0}
Let $\Omega$ be a bounded domain in $\bC^n$ with the canonical
K\" ahler form $\beta= dd^c\|z\|^2$, where $d= \partial + \bar{\partial}$,
$d^c=i(\bar{\partial}- \partial)$. For $1\leq m\leq n$, we denote $\bC_{(1,1)}$
the space of $(1,1)$-forms with constant coefficients. One defines the positive cone
\begin{equation}
\label{in-1}
\Gamma_m=
	\{ \eta\in \bC_{(1,1)}: \eta\wedge \beta^{n-1}
	\geq 0, ... , \eta^m \wedge\beta^{n-m}\geq 0\}.
\end{equation}
A $C^2$ smooth function $u$ is called  $m$-subharmonic  in $\Omega$
if at every point $z\in\Omega$ the $(1,1)$-form associated to its complex
Hessian belongs to $\Gamma_m$, i.e
\begin{equation}
\label{in-2}
 \sum_{j,k=1}^n
 	\frac{\partial^2u(z)}{\partial z_j\partial\bar{z_k}} i dz_j\wedge d\bar{z_k}
	\in \Gamma_m.
\end{equation}
It was observed by B\l ocki (see \cite{Bl}) that one may relax the smoothness
condition in the definition \eqref{in-2} and consider this inequality in the sense
of distributions to obtain a class, denoted by $SH_m(\Omega)$ (see preliminaries).
When functions $u_1, ..., u_k$, $1\leq k\leq m$, are in $SH_m(\Omega)$ and are locally
bounded, one still may define
$dd^cu_1\wedge dd^cu_2\wedge ...\wedge dd^cu_k\wedge \beta^{n-m}$
as a closed positive current of bidegree $(n-m+k,n-m+k)$. In particular
 $(dd^cu)^m\wedge \beta^{n-m}$ is a positive measure for $u$ bounded $m$-subharmonic.
 Thus, it is possible to study bounded solutions of the Dirichlet problem with positive
 Borel measures $\mu$ in $\Omega$ and continuous boundary data
 $\varphi\in C(\partial \Omega)$:
\begin{equation}
\label{heq}
\begin{cases}
	u \in  SH_m(\Omega) \cap L^{\infty}(\Omega), \\
	(dd^cu)^m \wedge \beta^{n-m} = d\mu, \\
	u(z) = \varphi(z)  \;\; \text{ on } \;\; \partial \Omega.
\end{cases}
\end{equation}
The Dirichlet problem for the complex Hessian equation \eqref{heq} in smooth cases was
first considered by S.Y. Li (see \cite{Li}). His main result says that if $\Omega$ is smoothly
bounded and strongly $m$-pseudoconvex  (see Definition~\ref{pr-de-4}) then, for a smooth
boundary data and for a smooth positive  measure, i.e $d\mu=f\beta^n$ and $f>0$ smooth,
there exists a unique smooth solution of the Dirichlet problem for the  Hessian equation.

The weak solutions of the equation \eqref{heq}, when the measure $d\mu$
is  possibly degenerate, were first considered by B\l ocki \cite{Bl}, more precisely,
he proved that there exists  a unique continuous solution of the homogeneous
Dirichlet problem  in the unit ball in $\bC^n$.

Very recently, in \cite{DK} Dinew and Ko\l odziej investigated weak solutions of the complex
Hessian equations \eqref{heq} with the right hand side more general, namely $d\mu = f\beta^n$
where $f\in L^p$, for $p>n/m$. One of their results extended Li's theorem, they proved that
the Dirichlet problem still has a unique continuous solution provided continuous boundary data
and $d\mu$ in $ L^p$ as above.
Their method exploited the new counterpart of pluripotential theory
for $m$-subharmonic functions, after showing a crucial inequality between the usual volume and
$m$-capacity which is a version of  the relative capacity for $m$-subharmonic functions

In the case m=n, the subsolution theorem due to Ko\l odziej  \cite{K1}
(see \cite{K2} for a simpler proof)
says that the Dirichlet problem \eqref{heq} in a strongly pseudoconvex domain
is solvable if there is a subsolution. Thus, one may ask the same question when $m<n$.
In this note we show that the subsolution theorem, Theorem~\ref{su-th-1},
 for the complex Hessian equation is still true by combining the new results of
 Dinew and Ko\l odziej for weak solutions of the complex Hessian equations
 and the method used to prove the subsolution theorem in the pluripotential case.

\bigskip

\section*{Acknowledgements} I am indebted to my advisor, professor S\l awomir Ko\l odziej,
for suggesting the problem and for many stimulating discussions.
I also would like to thank the referee whose suggestions and remarks helped to improve the exposition of the
paper. This work is supported by the International Ph.D Program
{\em " Geometry and Topology in Physical Models "}.

\bigskip

\section{Preliminaries}
\label{pr}
\setcounter{equation}{0}

\subsection{$m$-subharmonic functions}
\label{pr1}
We recall basic notions and results which are adapted from pluripotential theory.
The main sources are \cite{BT1,BT2}, \cite{Ce1,Ce2},  \cite{D1, D2}, \cite{K2}
for plurisubharmonic functions and \cite{Bl}, \cite{DK} for $m$-subharmonic functions.
Since a major part of pluripotential theory can be easily adapted to $m$-
subharmonic case, when the proof is only a copy of the original one
with obvious changes of notations, for the
proofs we refer the reader to the above references.
Let $\bC_{(k,k)}$ be the space of $(k,k)$-forms with constant coefficients, and
$$
	\Gamma_m=
		\{ \eta\in \bC_{(1,1)}: \eta\wedge \beta^{n-1}\geq 0
					, ... , \eta^m \wedge\beta^{n-m}\geq 0\} .
$$
We denote by $ \Gamma_m^*$  its dual cone
\begin{equation}
\label{pr1-1}
\Gamma_m^*
	= \{ \gamma\in \bC_{(n-1,n-1)}: \gamma\wedge \eta
		\geq 0 \;\; \text{ for every } \;\; \eta\in \Gamma_m\}.
\end{equation}
By Proposition 2.1 in \cite{Bl} we know that
$\{\eta_1\wedge...\wedge\eta_{m-1}\wedge\beta^{n-m}; \;\;
\eta_1,...,\eta_{m-1}\in \Gamma_m\}\subset\Gamma_m^*$, moreover if we consider
$
\Gamma_m^{**}
	=\{\eta\in \bC_{(1,1)}: \eta\wedge\gamma\geq 0
		\;\; \text{for every} \;\; \gamma\in\Gamma_m^*\}
$
then we have
$$
	\Gamma_m=\Gamma_m^{**}
$$
as $ \{\eta_1\wedge...\wedge\eta_{m-1}\wedge\beta^{n-m};
	\;\; \eta_1,...,\eta_{m-1}\in \Gamma_m\}^*\subset \Gamma_m$.
Therefore
\begin{equation}
\label{pr1-2}
\Gamma_m^*
	=\{\eta_1\wedge...\wedge\eta_{m-1}\wedge\beta^{n-m};
		\;\; \eta_1,...,\eta_{m-1}\in \Gamma_m\}.
\end{equation}
Since $\Gamma_n\subset\Gamma_{n-1}\subset ... \subset\Gamma_1$, we thus obtain
$$
	\Gamma_n^*\supset \Gamma_{n-1}^*\supset ...\supset \Gamma_1^*
		=\{ t\beta^{n-1}; t\geq 0\}.
$$
In particular, when  $\eta\in\Gamma_m^*$, and  it has a representation
$$
	\sum  a^{j\bar k} i^{(n-1)^2}\hat{dz_j} \wedge\hat{d\bar{z_k}}
$$
(this notation means that in the $(n-1,n-1)$-form only $dz_j$ and $d\bar{z_k}$ disappear in the
complete form  $dz\wedge d\bar{z}$ at positions $j$-th and $k$-th) then
 the Hermitian matrix $(a^{j\bar k})$ is
nonnegative  definite. In the  language of differential forms, a $C^2$ smooth function $u$ is
$m$-subharmonic  ($m$-sh for short)  if
 $$
 dd^cu\wedge\beta^{n-1}
 	\geq 0, ...,(dd^cu)^m\wedge\beta^{n-m}\geq 0
		\;\; \text{ at every point in }\;\; \Omega.
$$
\begin{definition}
\label{pr-de-1}
Let $u$ be a subharmonic function on an open subset $\Omega\subset \bC^n$.
Then $u$ is called $m$-subharmonic if for any collection of
 $\eta_1,...,\eta_{m-1}$ in $\Gamma_m$, the inequality
$$
	dd^cu\wedge \eta_1\wedge ... \wedge\eta_{m-1}\wedge\beta^{n-m}
		\geq 0
$$
holds in the  sense of currents.
Let $SH_m(\Omega)$ denote the set of all $m$-sh functions in $\Omega$.
\end{definition}

\begin{remark}
\label{pr-re-2}
{\bf (a)}  The condition \eqref{pr1-1} is equivalent to
$dd^cu\wedge \eta\geq 0$ for every $\eta\in\Gamma_m^*$ by \eqref{pr1-2}.
Hence, a subharmonic function $u$ is $m$-subharmonic if
\begin{equation}
\label{pr-re}
\int_\Omega u~dd^c\phi\wedge\eta
	= \int_\Omega u
		\sum_{j,k=1}^na^{j\bar k} \frac{\partial^2\phi}{\partial z_j\partial\bar{z_k}} \beta^n
	\geq 0
\end{equation}
for every non-negative test function $0\leq\phi$ in $\Omega$
and for every nonnegative definite Hermitian matrix
$(a^{j\bar{k}})$ of constant coefficients such that
$\eta=
	\sum_{j,k=1}^na^{j\bar k} i^{(n-1)^2} dz_1\wedge ...
	\wedge\hat{dz_j}\wedge ...\wedge dz_n\wedge d\bar{z_1}\wedge ...
	\wedge\hat{d\bar{z_k}}\wedge ...\wedge d\bar{z_n}
$
belongs to $\Gamma_{m}^*$.
This means that $u$ is subharmonic with respect to
a family of elliptic operators with constant coefficients.

{\bf (b)} A $C^2$ function $v$ is $m$-subharmonic
iff $dd^cv(z)$ belongs to $\Gamma_m$ at every $z\in \Omega$. Hence
$$
	dd^cu\wedge dd^cv_1\wedge ... \wedge dd^cv_{m-1}\wedge\beta^{n-m}
		\geq 0
$$
holds in $\Omega$ in the weak sense of currents,
for every collection $v_1, ... ,v_{m-1}\in SH_m\cap C^2(\Omega)$ and any
$u\in SH_m(\Omega)$.
\end{remark}

\begin{proposition}
\label{pr-pr-3}
Let $\Omega\subset \bC^n$ be a bounded open subset. Then
\begin{enumerate}
\item
\label{pr-pr3-1}
$ PSH(\Omega)
	= SH_n (\Omega)\subset SH_{n-1} (\Omega)\subset\cdots \subset SH_1 (\Omega)
	= SH(\Omega)$.
\item
\label{pr-pr3-2}
$SH_m (\Omega)$ is a convex cone.	
\item
\label{pr-pr3-3}
The limit of a decreasing sequence in $SH_m(\Omega)$ belongs to $SH_m(\Omega)$.
Moreover, the standard regularization $u\ast\rho_{\varepsilon}$
of a $m$-sh function is again a $m$-sh fucntion.
There $\rho_\varepsilon(z)=\frac{1}{\varepsilon^{2n}}\rho(\frac{z}{\varepsilon})$,
$\rho(z)=\rho(\|z\|^2)$ is a smoothing kernel,
with $\rho: \bR_+\rightarrow \bR_+$ defined by
$$
\rho(t) =
	\begin{cases}
		\frac{C}{(1-t)^2} \exp(\frac{1}{t-1}) &\ {\rm if}\ 0\leq t\leq 1, \\
		0 &\ {\rm if}\  t>1,
	\end{cases}
$$
for a constant C such that
$$
	\int_{\bC^n} \rho(\|z\|^2)\beta^n=1.
$$
\item
\label{pr-pr3-4}
If $u\in SH_m (\Omega)$\ and $\gamma: \bR \rightarrow \bR$\ is a convex,
nondecreasing function then $\gamma\circ u\in SH_m(\Omega)$.	
\item
\label{pr-pr3-5}
If $u, v\in SH_m(\Omega)$ then $\max\{u,v\}\in SH_m(\Omega)$.	
\item
\label{pr-pr3-6}
Let $\{u_\alpha\}\subset SH_m(\Omega)$ be a locally uniformly bounded from above and
$u= \sup u_\alpha$. Then the upper semi-continuous regularization
$u^*$ is $m$-sh and is equal to $u$ almost everywhere.
\end{enumerate}
\end{proposition}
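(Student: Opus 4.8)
The plan is to transfer to the $m$-subharmonic setting the standard list of closure properties of functions subharmonic with respect to a constant-coefficient elliptic operator, using the reformulation recorded in Remark~\ref{pr-re-2}(a). First I would fix notation: let $\cE_m$ be the family of second order operators $L=\sum_{j,k=1}^{n}a^{j\bar k}\,\partial^2/\partial z_j\partial\bar z_k$ whose coefficient matrix $(a^{j\bar k})$ is constant, Hermitian and nonnegative definite and whose associated $(n-1,n-1)$-form belongs to $\Gamma_m^{*}$; by \eqref{pr1-2} and the discussion preceding Definition~\ref{pr-de-1}, every $\gamma\in\Gamma_m^{*}$ arises in this way and has nonnegative definite matrix, while $\beta^{n-1}\in\Gamma_m^{*}$. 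The point is then that $u\in SH_m(\Omega)$ if and only if $u$ is a genuine subharmonic function (in $L^1_{\mathrm{loc}}$, upper semicontinuous, not $\equiv-\infty$ on a component) with $Lu\geq0$ in the sense of distributions for every $L\in\cE_m$. With this dictionary, parts (2)--(6) become the assertion that the cone of functions subharmonic for a single constant-coefficient elliptic $L$ is stable under the listed operations, uniformly in $L\in\cE_m$, and I would derive each of them from the corresponding classical statement.

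For part (1) I would dualize the chain $\Gamma_n\subset\cdots\subset\Gamma_1$ to obtain $\Gamma_1^{*}\subset\cdots\subset\Gamma_n^{*}$, so that increasing $m$ only adds constraints $Lu\geq0$ and hence $SH_n\subset\cdots\subset SH_1$; since $\Gamma_1^{*}=\{t\beta^{n-1}:t\geq0\}$ and $dd^cu\wedge\beta^{n-1}$ is a positive multiple of $(\Delta u)\,\beta^{n}$, one gets $SH_1=SH$, and since $\Gamma_n$ is exactly the cone of nonnegative $(1,1)$-forms (nonnegativity of all the elementary symmetric functions of the eigenvalues forces nonnegativity of the eigenvalues), testing $dd^cu\wedge\gamma\geq0$ for $\gamma\in\Gamma_n^{*}$ against the forms $i\,dz_2\wedge d\bar z_2\wedge\cdots\wedge i\,dz_n\wedge d\bar z_n$ and their unitary rotations forces the complex Hessian of $u$ to be nonnegative, i.e.\ $SH_n=PSH$. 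Part (2) is just linearity of $L$. For part (3) I would combine the classical fact that a decreasing limit of $L$-subharmonic functions is $L$-subharmonic (pass to the limit in the distributional inequality by monotone convergence) with the identity $L(u\ast\rho_\varepsilon)=(Lu)\ast\rho_\varepsilon\geq0$, valid because $L$ has constant coefficients, noting that $u\ast\rho_\varepsilon$ is smooth, subharmonic, and decreases to $u$ since $\rho$ is radial.

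Parts (4) and (5) both rest on the chain rule. In the $C^2$ case $dd^c(\gamma\circ u)=\gamma'(u)\,dd^cu+\gamma''(u)\,i\,\partial u\wedge\bpartial u$, where the second summand is a nonnegative $(1,1)$-form, hence in $\Gamma_n\subset\Gamma_m$, and $\gamma',\gamma''\geq0$, so the sum lies in the convex cone $\Gamma_m$; the general case follows by replacing $u$ with $u\ast\rho_\varepsilon$ and $\gamma$ with a decreasing sequence of smooth convex nondecreasing functions and applying part (3) to the two monotone limits. For part (5) I would take a regularized maximum $M_\varepsilon(s,t)$, convex and nondecreasing in each variable, with $M_\varepsilon=\max\{s,t\}$ for $|s-t|\geq\varepsilon$ and $M_\varepsilon\searrow\max$ as $\varepsilon\to0$; for $u,v\in SH_m\cap C^2$ the chain rule presents $dd^cM_\varepsilon(u,v)$ as the $\Gamma_m$-element $\partial_sM_\varepsilon\,dd^cu+\partial_tM_\varepsilon\,dd^cv$ plus the term $\partial_{ss}M_\varepsilon\,i\,\partial u\wedge\bpartial u+\partial_{st}M_\varepsilon\,i(\partial u\wedge\bpartial v+\partial v\wedge\bpartial u)+\partial_{tt}M_\varepsilon\,i\,\partial v\wedge\bpartial v$, which is a nonnegative $(1,1)$-form because the Hessian of the convex function $M_\varepsilon$ is positive semidefinite; hence $M_\varepsilon(u,v)\in SH_m$, and part (3) gives $\max\{u,v\}\in SH_m$ after letting $\varepsilon\to0$ (for non-smooth $u,v$, after first mollifying).

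Finally, for part (6) I would run the Brelot--Cartan argument: choose a countable subfamily with $(\sup_j u_{\alpha_j})^{*}=u^{*}$, put $v_k=\max\{u_{\alpha_1},\dots,u_{\alpha_k}\}\in SH_m$ by part (5) and $v=\lim_k v_k$, so that $v^{*}=u^{*}$; for each $L\in\cE_m$, monotone convergence together with the a.e.\ equality $v=v^{*}$ gives $\int v^{*}\,L\phi\,d\lambda=\lim_k\int\phi\,d(Lv_k)\geq0$ for every test function $\phi\geq0$, so $Lv^{*}\geq0$ and $u^{*}=v^{*}\in SH_m$, while $v\leq u\leq u^{*}=v^{*}$ with $v=v^{*}$ a.e.\ forces $u=u^{*}$ a.e. I expect the genuine work to lie not in any single step but in organizing the approximation arguments of parts (4)--(6) so that every passage to the limit is monotone and every function stays locally integrable and locally bounded above; the underlying potential-theoretic facts for a fixed constant-coefficient elliptic operator are entirely standard, which is why one is content to refer to the literature for the details.
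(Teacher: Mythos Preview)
Your proposal is correct and, in overall architecture, matches the paper: both use the reformulation from Remark~\ref{pr-re-2}(a) to reduce $m$-subharmonicity to simultaneous subharmonicity for a family of constant-coefficient elliptic operators, and parts \eqref{pr-pr3-1}--\eqref{pr-pr3-4} and \eqref{pr-pr3-6} proceed along the same lines (decreasing approximation, convolution commutes with $L$, Choquet's lemma plus finite maxima).

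The one genuine divergence is in part \eqref{pr-pr3-5}. You handle $\max\{u,v\}$ via a regularized maximum $M_\varepsilon(s,t)$ and the chain rule, exhibiting $dd^cM_\varepsilon(u,v)$ as a nonnegative combination of $dd^cu$, $dd^cv$ and a nonnegative $(1,1)$-form coming from the convexity of $M_\varepsilon$, then letting $\varepsilon\to0$ monotonically. The paper instead observes that for smooth $u_\varepsilon,v_\varepsilon$ the function $w=\max\{u_\varepsilon,v_\varepsilon\}$ is semi-convex (adding $C\|z\|^2$ makes it convex in $\bR^{2n}$), invokes Alexandrov's theorem to get pointwise second derivatives almost everywhere with $dd^cw(x)\in\Gamma_m$ a.e., and then mollifies $w$ to pass from a.e.\ membership in $\Gamma_m$ to the distributional inequality. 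Your route is arguably more self-contained since it avoids the a.e.\ twice-differentiability of convex functions; the paper's route, on the other hand, makes transparent that the obstruction to $\max$ lies only on the contact set and yields the stronger pointwise statement $dd^cw\in\Gamma_m$ a.e.\ as a byproduct. Both arguments are standard in pluripotential theory and either is acceptable here.
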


\begin{proof}
\eqref{pr-pr3-1} and \eqref{pr-pr3-2} and the first part of \eqref{pr-pr3-3}
are obvious from the definition of $m$-sh functions.
From the formula \eqref{pr-re} we have, for $\eta \in \Gamma_m^*$,
$$
	\int (u\ast\rho_{\varepsilon})~dd^c\phi\wedge\eta
		= \int u~dd^c(\phi\ast\rho_{\varepsilon})\wedge \eta\geq 0,
$$
since $\phi\ast\rho_{\varepsilon}$ is again a nonnegative test function.
Thus \eqref{pr-pr3-3} is proved.
For \eqref{pr-pr3-4},  the smooth function $\gamma\ast\rho_\varepsilon$
(the standard regularization on $\bR$)
is convex and increasing, therefore $(\gamma\ast\rho_\varepsilon)\circ u\in SH_m(\Omega)$.
Since
$(\gamma\ast\rho_\varepsilon)\circ u$ decreases to $\gamma\circ u$
as $\varepsilon\rightarrow 0$, applying
the first part of \eqref{pr-pr3-3} we have $\gamma\circ u\in SH_m(\Omega)$.
In order to prove \eqref{pr-pr3-5},
note that by using \eqref{pr-pr3-3} it is enough to show that
$w=\max \{u_\varepsilon, v_\varepsilon\}$ is $m$-sh,
where $u_\varepsilon:= u*\rho_\varepsilon, v_\varepsilon:= v*\rho_\varepsilon$.
Since $w$ is semi-convex,  i.e there is a constant $C=C_\varepsilon >0$ big enough such that
$w+C\|z\|^2=\max \{u_\varepsilon+C\|z\|^2, v_\varepsilon+C\|z\|^2\}$ is a convex function
in $\bR^{2n}$, hence
it has second derivative almost everywhere and $dd^c w(x) \in \Gamma_m$ for almost
everywhere $x $ in $\Omega$.
Let $w_\varepsilon$ is a regularization of $w$,
by the formula of the convolution
$w_\varepsilon(x) = \int_\Omega w(x-\varepsilon y)\rho(y)\beta^n(y)$ we have
$$
	dd^c w_\varepsilon(x) = \int_\Omega dd^c w(x-\varepsilon y)\rho(y)\beta^n(y).
$$
Thus, for $\eta \in \Gamma_m^*$
$$
	dd^c w_\varepsilon(x)\wedge\eta
		= \int_\Omega\left[ dd^c w(x-\varepsilon y)\wedge\eta\right] \rho(y)\beta^n(y)
		\geq 0.
$$
\eqref{pr-pr3-6} is a consequence of \eqref{pr-pr3-5} and Choquet's Lemma.
\end{proof}

\subsection{The complex Hessian operator}
\label{pr2}
For $1\leq k\leq m$, $u_1, ..., u_k\in SH_m\cap  L^\infty_{loc}(\Omega)$ the operator
$dd^cu_k\wedge dd^cu_{k-1}\wedge ... \wedge dd^cu_1\wedge\beta^{n-m}$
is defined inductively by (see \cite{Bl}, \cite{DK})
$$
	dd^cu_k\wedge dd^cu_{k-1}\wedge ... \wedge dd^cu_1\wedge\beta^{n-m}
		:= dd^c(u_kdd^cu_{k-1}\wedge ... \wedge dd^cu_1\wedge\beta^{n-m})
		\leqno(H_k)
$$
which is a closed positive current of bidegree $(n-m+k,n-m+k)$.
This operator is also continuous under
decreasing sequences and symmetric (see Remark~\ref{pr-reth9}).
In the case $k=m$, $dd^cu_1\wedge dd^cu_2\wedge ... \wedge dd^cu_m\wedge\beta^{n-m}$
is a nonnegative Borel measure, in particular, when $u=u_1=...=u_m$ currents (measures)
 $(dd^cu)^m\wedge\beta^{n-m}$ are well-defined for $u\in L_{loc}^\infty(\Omega)$.
 The above definitions essentially follow from the analogous definitions of
  Bedford and Taylor (\cite{BT1}, \cite{BT2}) for plurisubharmonic functions.
\begin{proposition}[Chern-Levine-Nirenberg inequalities]
\label{cln}
Let $K\subset\subset U \subset\subset \Omega$, where $K$ is compact, $U$ is open.
Let $u_1,...,u_k\in SH_m\cap  L^\infty(\Omega)$, $1\leq k\leq m$
and $v\in SH_m(\Omega)$ then there exists a
constant $C=C_{K,U,\Omega}\geq 0$ such that
\begin{enumerate}
 \item[(i)]
 	$\|dd^cu_1\wedge ... \wedge dd^cu_k\wedge\beta^{n-m}\|_{K}
 		\leq C~\|u_1\|_{ L^\infty(U)} ... \|u_k\|_{ L^\infty(U)},$
 \item[(ii)]
	 $\|dd^cu_1\wedge ... \wedge dd^cu_k\wedge\beta^{n-m}\|_{K}
 		\leq C~\|u_1\|_{ L^1(\Omega)}.\|u_2\|_{ L^\infty(\Omega)} ... \|u_k\|_{ L^\infty(\Omega)},$
 \item[(iii)]
	 $\|vdd^cu_1\wedge ... \wedge dd^cu_k\wedge\beta^{n-m}\|_{K}
		 \leq C~\|v\|_{ L^1(\Omega)}.\|u_1\|_{ L^\infty(\Omega)} ... \|u_k\|_{ L^\infty(\Omega)}.$
\end{enumerate}
\end{proposition}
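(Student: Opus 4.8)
The plan is to follow the well-established template of Chern–Levine–Nirenberg estimates from pluripotential theory, adapted to the $m$-subharmonic setting, proceeding by induction on $k$. The three inequalities are proved together, peeling off one factor at a time. First I would reduce to a convenient normalization: by shrinking slightly we may assume all the $u_i$ and $v$ are negative on a neighborhood of $U$, and it suffices to estimate the mass $\int_K dd^cu_1\wedge\cdots\wedge dd^cu_k\wedge\beta^{n-m}$ since this current is positive, so its mass on $K$ is controlled by testing against a fixed nonnegative cutoff. Concretely, choose $\chi\in C_0^\infty(U)$ with $0\le\chi\le 1$, $\chi\equiv 1$ on a neighborhood of $K$, and such that $dd^c\chi\le A\,\beta$ pointwise for some $A=A_{K,U}$; then
\[
\|dd^cu_1\wedge\cdots\wedge dd^cu_k\wedge\beta^{n-m}\|_K
\;\le\; \int_\Omega \chi\, dd^cu_1\wedge\cdots\wedge dd^cu_k\wedge\beta^{n-m}.
\]

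The inductive step for (i) is the heart of the matter. Using the definition $(H_k)$ of the Hessian operator and that the current $T:=dd^cu_2\wedge\cdots\wedge dd^cu_k\wedge\beta^{n-m}$ is closed and positive, integrate by parts twice to move $dd^c$ off $u_1$ and onto $\chi$:
\[
\int_\Omega \chi\, dd^cu_1\wedge T
\;=\; \int_\Omega u_1\, dd^c\chi\wedge T.
\]
Since $u_1<0$ and $dd^c\chi\wedge T \le A\,\beta\wedge T$ (here one needs that $\beta\wedge T - A^{-1}dd^c\chi\wedge T$ is a positive current, which follows because $A\beta-dd^c\chi\ge0$ as a $(1,1)$-form with constant-comparable coefficients and $T$ is a positive current of the complementary bidegree — this is exactly where the cone structure $\Gamma_m^*$ from the preliminaries is used), we get
\[
\int_\Omega \chi\, dd^cu_1\wedge T
\;\le\; A\,\|u_1\|_{L^\infty(U)}\int_{U'} \beta\wedge T,
\]
where $U'$ is a slightly larger open set still compactly inside $\Omega$. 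The last integral is again of the same form but with one fewer $dd^cu_i$ factor (the factor $\beta$ being harmless and already present in $\beta^{n-m}$ — more precisely $\beta\wedge T$ is the mass of $dd^cu_2\wedge\cdots\wedge dd^cu_k\wedge\beta^{n-m+1}$, which is estimated by the induction hypothesis applied with $m$ replaced where needed, or directly since $\beta$ can be absorbed into a fixed smooth cutoff as well). Iterating, one arrives at the base case $\|\beta^{n-m+k}\wedge(\text{nothing})\|_{U^{(k)}}\le C$, a finite geometric constant, and collecting the factors $\|u_i\|_{L^\infty}$ yields (i).

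For (ii) one runs the \emph{same} integration by parts but stops after moving $dd^c$ off $u_1$ only once is not enough — instead, after the double integration by parts producing $\int u_1\, dd^c\chi\wedge T$, one simply estimates $|dd^c\chi\wedge T|\le A\beta\wedge T$ and bounds $\beta\wedge T$ on $U'$ by a constant times $\|u_2\|_{L^\infty}\cdots\|u_k\|_{L^\infty}$ using part (i), leaving the $u_1$ factor in $L^1$: $\int_\Omega|u_1|\,dd^c\chi\wedge T \le A C\|u_1\|_{L^1(\Omega)}\|u_2\|_{L^\infty}\cdots\|u_k\|_{L^\infty}$. Part (iii) is the analogous statement with an extra $m$-subharmonic (hence locally integrable) weight $v$: write $\int_K (-v)\,dd^cu_1\wedge\cdots\wedge\beta^{n-m} \le \int_\Omega (-v)\chi\, dd^cu_1\wedge\cdots\wedge\beta^{n-m}$, and since $v\le 0$ one can no longer integrate by parts to move $dd^c$ onto $v$ gratuitously, but the current $\chi\, dd^cu_1\wedge\cdots\wedge dd^cu_k\wedge\beta^{n-m}$ has a bounded potential when tested properly; the cleanest route is to first apply (i) to bound the measure $dd^cu_1\wedge\cdots\wedge dd^cu_k\wedge\beta^{n-m}$ on a slightly smaller compact set, giving it total mass $\le C\prod\|u_i\|_{L^\infty}$, and then note $\int (-v)\,d(\text{that measure}) \le \|v\|_{L^1(U')}\cdot(\text{sup of a suitable weight})$ — but since $v$ need not be bounded this final pairing must instead be done by yet another integration by parts, exchanging $v$ with the bounded $u_k$: this is the standard Cegrell-type trick and gives $\le C\|v\|_{L^1(\Omega)}\|u_1\|_{L^\infty}\cdots\|u_k\|_{L^\infty}$.

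The main obstacle, and the only point genuinely requiring the $m$-subharmonic theory rather than verbatim pluripotential arguments, is justifying the integration by parts in $(H_k)$ and the inequality $dd^c\chi\wedge T\le A\beta\wedge T$ at each stage: one must know that wedging a closed positive current of bidegree $(n-m+j,n-m+j)$ (for $j<m$) with $A\beta-dd^c\chi$ again yields a positive current, which requires $A\beta-dd^c\chi\in\Gamma_m$ pointwise (immediate for $A$ large since $\Gamma_1\supset\Gamma_m$ and $A\beta-dd^c\chi\ge0$ as an ordinary $(1,1)$-form forces it into $\Gamma_1$, but we actually need it in $\Gamma_m$, which still holds: any nonnegative $(1,1)$-form lies in every $\Gamma_m$) together with the fact, recorded in the preliminaries via $\Gamma_m=\Gamma_m^{**}$ and $(H_k)$, that the currents in question pair nonnegatively against $\Gamma_m$. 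Granting this, every step is a routine adaptation, and the constants are tracked by the finite chain of shrinking neighborhoods $K\subset U^{(k)}\subset\cdots\subset U^{(1)}\subset U\subset\Omega$.
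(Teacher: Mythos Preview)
Your argument for (i) is essentially the paper's: pick a cutoff $\chi$, integrate by parts to move $dd^c$ onto $\chi$, estimate $dd^c\chi$ by $A\beta$, and iterate through a chain of shrinking neighborhoods. Fine.

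The gap is in (ii). After integrating by parts you arrive at $\int_\Omega u_1\,dd^c\chi\wedge T$ with $T=dd^cu_2\wedge\cdots\wedge dd^cu_k\wedge\beta^{n-m}$, and you then assert
\[
\int_\Omega |u_1|\,dd^c\chi\wedge T \;\le\; AC\,\|u_1\|_{L^1(\Omega)}\,\|u_2\|_{L^\infty}\cdots\|u_k\|_{L^\infty}.
\]
This does not follow from what you have. Part (i) only controls the \emph{total mass} of $T$ (equivalently of $\beta\wedge T$) on compacts, not any pointwise density; and for a general positive measure $\sigma$ the quantity $\int|u_1|\,d\sigma$ is \emph{not} bounded by $\|u_1\|_{L^1(\beta^n)}\cdot\sigma(\Omega)$. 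Think of $\sigma$ a Dirac mass at a point where $u_1=-\infty$: both $\|u_1\|_{L^1}$ and $\sigma(\Omega)$ are finite while the integral is infinite. The paper avoids this by running the induction in the opposite order: first use (i) to peel off $u_2,\dots,u_k$ with their $L^\infty$ norms, reducing to an estimate on the mass of $dd^cu_1\wedge\beta^{n-m}$ alone; the mass of that current is $\int dd^cu_1\wedge\beta^{n-1}$, i.e.\ essentially $\int\Delta u_1$, which is classically bounded by $C\|u_1\|_{L^1}$ for any subharmonic $u_1$. That last step is the input the paper cites from Demailly.

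Your sketch for (iii) is too loose to be a proof. The ``exchange $v$ with $u_k$'' idea is indeed the right direction (this is how Demailly's Proposition~3.11 proceeds), but after the swap you face $dd^cv$ wedged with $dd^cu_1\wedge\cdots\wedge dd^cu_{k-1}\wedge\beta^{n-m}$ where $v$ is merely $m$-sh, not locally bounded, so the definition $(H_k)$ does not directly apply and one must work through approximations and the CLN estimates already in hand. The paper itself does not spell this out either---it simply cites Demailly---so I flag it not as an error but as a place where your proposal would need substantially more care than the one line you give it.
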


\begin{proof}
{\bf (i)} By induction we only need to prove that
$$
	\|dd^cu_1\wedge ... \wedge dd^cu_k\wedge\beta^{n-m}\|_{K}
		\leq C~\|u_1\|_{ L^\infty(U)} \|dd^cu_2\wedge ...
			\wedge dd^cu_k\wedge\beta^{n-m}\|_U.
$$
In fact, let $\chi\geq 0$ be a test function equal to 1 on $K$.
Then an integration by parts yields
$$
	\|dd^cu_1\wedge ... \wedge dd^cu_k\wedge\beta^{n-m}\|_{K}
		\leq C\int_U\chi dd^cu_1\wedge ... \wedge dd^cu_k\wedge\beta^{n-k}
		=C\int_Uu_1dd^c\chi\wedge ... \wedge dd^cu_k\wedge\beta^{n-k}.
$$
Thus,
$$
	\|dd^cu_1\wedge ... \wedge dd^cu_k\wedge\beta^{n-m}\|_{K}
		\leq C' \|u_1\|_{ L^\infty(U)}\|dd^cu_2\wedge ...
			\wedge dd^cu_k\wedge\beta^{n-m}\|_U,
$$
where $C'$  depends only  on bounds of coefficients of $dd^c\chi$ and on the set  $U$.

{\bf (ii)} It is a simple consequence of  (i), and the result
$\| dd^c w \wedge\beta^{n-1}\|_K\leq C_{K,U}\|w\|_{ L^1(U)}$
for every $w\in SH_m(\Omega)$ (see \cite{D2}, Remark 3.4).

{\bf (iii)}  See \cite{D2} Proposition 3.11.
\end{proof}

\subsection{ $m$-pseudoconvex domains}
\label{pr3}
Let $\Omega$ be a bounded domain with $\partial\Omega$ in the class $C^2$.
Let $\rho\in C^2$ in a neighborhood of $\bar{\Omega}$
be a defining function of $\Omega$, i.e. a function such that
$$
	\rho<0 \;\; \text{on} \;\; \Omega, \;\;\;\; \rho
		= 0 \;\; \text{and} \;\; d\rho\ne 0
			\;\; \text{ on } \;\; \partial\Omega.
$$
\begin{definition}
\label{pr-de-4}
A $C^2$ bounded domain is called strongly $m$-pseudoconvex
if there is a defining function $\rho$ and some $\varepsilon>0$
such that $(dd^c\rho)^k\wedge\beta^{n-k}\geq \varepsilon\beta^n$
in $\bar{\Omega}$ for every $1\leq k\leq m$.
\end{definition}

It is obvious that a strongly pseudoconvex domain is a strongly $m$-pseudoconvex domain.
The properties of strongly $m$-pseudoconvex domains  are similar
to those of strongly pseudoconvex domains, e.g, it can be
shown that strongly $m$-pseudoconvexity is characterized by
a condition on its boundary  (see \cite{Li}, Theorem 3.1).
We also have the criterion that if the Levi form of $\Omega$
corresponding to $\rho$ belongs to
the interior of $\Gamma_{m-1}$ then $\Omega$
is strongly $m$-pseudoconvex (see \cite{Li}, Proposition 3.3).

\subsection{Cegrell's inequalities for the complex Hessian operator}
\label{pr4}
It is sufficient for our purpose in this section to work within the class of
$m$-sh functions which are continuous
up to the boundary and equal to 0 on the boundary.
Let $\Omega$ be a strongly $m$-pseudoconvex domain in $\bC^n$,
we denote
$$
	\cE_0(m)
		=\lbrace u\in SH_m(\Omega) \cap C(\bar\Omega); ~u_{|_{\partial\Omega}}=0,
	~ \int_\Omega(dd^cu)^m\wedge\beta^{n-m}<+\infty \rbrace.
$$
For the case $m=n$, this class was introduced by Cegrell in \cite{Ce1}.
It is a convex cone for $1\leq m\leq n$
(see \cite{Ce1}, p. 188 ). Our goal is to establish inequalities
very similar to the one due to Cegrell (see \cite{Ce2}, Lemma 5.4, Theorem 5.5)
for the Monge-Amp\`ere operator. In order to avoid confusions and
trivial statements we only consider $2\leq m\leq n-1.$
\begin{proposition}
\label{pr-pr-5}
Let $u,v,h\in\cE_0(m)$, and $1\leq p, q\leq m$, $p+q\leq m$, set $T=-hS$ where
$S=dd^ch_1\wedge...\wedge dd^ch_{m-p-q}\wedge\beta^{n-m}$
with $h_1,...,h_{m-p-q}$ are also in $\cE_0(m)$, then
$$
	 \int_\Omega (dd^cu)^p\wedge (dd^cv)^q\wedge T
		\leq\left[\int_\Omega
			(dd^cu)^{p+q}\wedge T\right]^\frac{p}{p+q}
				\left[\int_\Omega (dd^cv)^{p+q}\wedge T\right]^\frac{q}{p+q}.
$$
\end{proposition}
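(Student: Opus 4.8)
The plan is to establish the inequality by first reducing to the case $p=q=1$ via an induction on $p+q$, and then proving the bilinear case by a Cauchy--Schwarz type argument on an auxiliary positive form. This is the standard strategy used for Cegrell's inequalities in the pluripotential setting (\cite{Ce2}), and since all the ingredients---integration by parts for functions in $\cE_0(m)$, the Chern--Levine--Nirenberg estimates (Proposition~\ref{cln}), and continuity under decreasing sequences---carry over to $m$-subharmonic functions, the proof should be a faithful adaptation.

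First I would prove the base case: for $u,v,h\in\cE_0(m)$ and $R=dd^cw_1\wedge\cdots\wedge dd^cw_{m-2}\wedge\beta^{n-m}$ with $w_i\in\cE_0(m)$ (writing $T=-hR'$ where $R = dd^ch_1\wedge\cdots\wedge\beta^{n-m}$, so $T$ is a closed positive current of the right bidegree), one has
$$
\int_\Omega dd^cu\wedge dd^cv\wedge T \le \left(\int_\Omega dd^cu\wedge dd^cu\wedge T\right)^{1/2}\left(\int_\Omega dd^cv\wedge dd^cv\wedge T\right)^{1/2}.
$$
The key point is that the bilinear form $(u,v)\mapsto \int_\Omega dd^cu\wedge dd^cv\wedge T$ is symmetric---which follows from the symmetry of the Hessian operator noted in Remark~\ref{pr-reth9} together with integration by parts, using that $u,v,h_i$ all vanish on $\partial\Omega$---and that it is positive semidefinite, i.e. $\int_\Omega dd^cu\wedge dd^cu\wedge T\ge 0$. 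Positivity holds because $T=-hR$ with $h\le 0$ and $R$ closed positive, so by integration by parts $\int_\Omega dd^cu\wedge dd^cu\wedge T = \int_\Omega (-h)\, dd^cu\wedge dd^cu\wedge R$, and $dd^cu\wedge dd^cu\wedge R\ge0$ as a positive current paired against the nonnegative function $-h$. Once symmetry and positive semidefiniteness are in hand, Cauchy--Schwarz (applied to the quadratic form $Q(u+tv)\ge 0$ for all real $t$, expanding and optimizing in $t$) yields the base inequality. A minor technical point is to justify the integration by parts when the functions are merely continuous up to the boundary and not smooth; this is handled by approximating each function by a decreasing sequence of smooth $m$-sh functions (available since $\Omega$ is strongly $m$-pseudoconvex) and passing to the limit using continuity of the Hessian operator under decreasing sequences, with the CLN inequalities guaranteeing the relevant masses stay bounded.

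Next I would run the induction on $p+q$. Given the bilinear case, suppose $p+q\ge 3$ and assume the statement for all smaller values. Write $\int_\Omega (dd^cu)^p\wedge(dd^cv)^q\wedge T$ and apply the base inequality to the ``slot'' pairing one factor of $dd^cu$ against one factor of $dd^cv$, treating $(dd^cu)^{p-1}\wedge(dd^cv)^{q-1}\wedge T$ as the positive current in that bilinear form (note this current is again of the form $-h'S'$ up to rearrangement, or more simply one works directly with the positive current $(dd^cu)^{p-1}\wedge(dd^cv)^{q-1}\wedge T$ and the same Cauchy--Schwarz mechanism). This produces
$$
\int_\Omega (dd^cu)^p\wedge(dd^cv)^q\wedge T \le \left(\int_\Omega (dd^cu)^{p+1}\wedge(dd^cv)^{q-1}\wedge T\right)^{1/2}\left(\int_\Omega (dd^cu)^{p-1}\wedge(dd^cv)^{q+1}\wedge T\right)^{1/2},
$$
and then one invokes the inductive hypothesis (applied to the exponent pairs $(p+1,q-1)$ and $(p-1,q+1)$, both with sum $p+q$ but ``more extreme'', so one iterates) to bound each factor by the corresponding power of $\int_\Omega (dd^cu)^{p+q}\wedge T$ and $\int_\Omega (dd^cv)^{p+q}\wedge T$. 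Collecting exponents via the elementary identity that the resulting products of square roots telescope to $p/(p+q)$ and $q/(p+q)$ gives the claim; this bookkeeping is the same as in the Monge--Amp\`ere case and is purely combinatorial.

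The main obstacle I anticipate is not conceptual but technical: ensuring all integrations by parts are legitimate and all intermediate integrals are finite, given that we only assume $u,v,h,h_i\in\cE_0(m)$ (continuous, zero boundary values, finite total Hessian mass) rather than boundedness away from the boundary or smoothness. The resolution is systematic regularization---replace each function by $\max\{u,-j\}$ or by a decreasing smooth approximation, prove the inequality in the regular setting where integration by parts is classical, and then pass to the limit. At each limiting step one needs: (a) weak convergence of the relevant wedge products of currents (continuity under decreasing sequences, Proposition-level facts adapted from Bedford--Taylor), and (b) control of boundary terms, which vanish because the approximants can be taken to agree with $0$ on $\partial\Omega$ or because the CLN estimates (Proposition~\ref{cln}) bound the masses near $\partial\Omega$ uniformly. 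Care is also needed that the total integral $\int_\Omega (dd^cu)^{p+q}\wedge T$ on the right is finite---but this again follows from CLN together with the defining finiteness condition in $\cE_0(m)$ and the fact that $T$ is dominated (after integration by parts) by a bounded function times a bounded-mass closed positive current. None of these steps is genuinely new; the content of the proposition is that Cegrell's argument transfers verbatim, with $\beta^{n-m}$ carried along passively through every wedge product.
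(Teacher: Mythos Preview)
Your proposal is correct and is precisely Cegrell's argument (Lemma~5.4 in \cite{Ce2}) that the paper cites without reproducing; the paper adds only the remark that both sides are finite by convexity of the cone $\cE_0(m)$. One small point of phrasing: the passage from the bilinear Cauchy--Schwarz to general $(p,q)$ is not an induction on $p+q$ but a log-convexity argument---from $f(k)^2\le f(k-1)f(k+1)$ for $f(k)=\int_\Omega(dd^cu)^k\wedge(dd^cv)^{p+q-k}\wedge T$ one gets $f(p)\le f(p+q)^{p/(p+q)}f(0)^{q/(p+q)}$ directly---though your ``so one iterates'' shows you have the right mechanism in mind.
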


\begin{proof}
See Lemma 5.4 in \cite{Ce2}.
We only remark here that two sides of the inequality are finite because of the
convexity of the cone $\cE_0(m)$.
\end{proof}

\begin{remark}
\label{pr-repr5}
The statement in Proposition 1.5 is still true when
$h\in SH_m\cap  L^\infty(\Omega)$,
$\lim_{\zeta \rightarrow \partial\Omega} h(\zeta)=0$ and
$\int_\Omega (dd^ch)^m\wedge \beta^{n-m}<+\infty$ since the integration by
 parts formula is valid as in the case of the continuous case (see \cite{Ce2}, Corollary 3.4 ).
\end{remark}

Applying Proposition~\ref{pr-pr-5} for some special cases of
$m$-sh functions in $\cE_0(m)$ we obtain

\begin{corollary}
\label{pr-co-6}
For $u,v,h\in\cE_0(m)$, $1\leq p\leq m-1$, then
\begin{enumerate}
\item[(i)]
\begin{equation*}
\begin{aligned}
\int_\Omega -h  (dd^cu)^p\wedge (dd^cv)^{m-p}\wedge & \beta^{n-m} \\
	& \leq\left[\int_\Omega
	- h(dd^cu)^m\wedge\beta^{n-m}\right]^\frac{p}{m}
	\left[\int_\Omega -h(dd^cv)^m\wedge\beta^{n-m}\right]^\frac{m-p}{m},
\end{aligned}
\end{equation*}
\item[(ii)]
$\int_\Omega(dd^cu)^p\wedge (dd^cv)^{m-p}\wedge\beta^{n-m}
	\leq\left[\int_\Omega (dd^cu)^m\wedge\beta^{n-m}\right]^\frac{p}{m}
	\left[\int_\Omega(dd^cv)^m\wedge\beta^{n-m}\right]^\frac{m-p}{m}.$
\end{enumerate}
\end{corollary}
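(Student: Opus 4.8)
Here is how I would prove Corollary~\ref{pr-co-6}.

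The plan is to extract both estimates from Proposition~\ref{pr-pr-5}. For part (i) I would simply specialize Proposition~\ref{pr-pr-5} to $q=m-p$. Then $p+q=m$, so the number of auxiliary factors $m-p-q$ is zero, $S$ is the empty product $\beta^{n-m}$, the current $T$ equals $-h\beta^{n-m}$, and the inequality of Proposition~\ref{pr-pr-5} reads
$$
\int_\Omega -h\,(dd^cu)^p\wedge(dd^cv)^{m-p}\wedge\beta^{n-m}
 \leq\left[\int_\Omega -h\,(dd^cu)^m\wedge\beta^{n-m}\right]^{\frac{p}{m}}
 \left[\int_\Omega -h\,(dd^cv)^m\wedge\beta^{n-m}\right]^{\frac{m-p}{m}},
$$
which is exactly (i); both sides are finite since $h$ is bounded on $\bar\Omega$ and $u,v\in\cE_0(m)$, exactly as in the proof of Proposition~\ref{pr-pr-5}.

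For part (ii) the idea is to replace the weight $-h$ by an increasing sequence of weights converging to the constant $1$. Since $\Omega$ is strongly $m$-pseudoconvex, fix a defining function $\rho$ as in Definition~\ref{pr-de-4}; the inequalities $(dd^c\rho)^k\wedge\beta^{n-k}\geq\varepsilon\beta^n$ for $1\leq k\leq m$ show that $dd^c\rho(z)\in\Gamma_m$ at every point, so $\rho$ is $m$-sh, continuous on $\bar\Omega$, vanishes on $\partial\Omega$, and has finite total Hessian mass (being smooth on a neighbourhood of $\bar\Omega$); hence $\rho\in\cE_0(m)$. I would then set $h_j:=\max\{\,j\rho,\,-1\,\}$. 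By Proposition~\ref{pr-pr-3} this is $m$-sh; it is continuous on $\bar\Omega$, it coincides with $j\rho$ in a neighbourhood of $\partial\Omega$ so it vanishes there, and it equals the constant $-1$ off a compact subset of $\Omega$, so its total Hessian mass is finite (by the Chern-Levine-Nirenberg inequalities on the compact part together with the smoothness of $j\rho$ near $\partial\Omega$); thus $h_j\in\cE_0(m)$. Moreover $-h_j=\min\{-j\rho,1\}$ increases pointwise to $1$ on $\Omega$ as $j\to\infty$, since $\rho<0$ on $\Omega$.

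Finally I would apply part (i) with $h$ replaced by $h_j$ and let $j\to\infty$. As $(dd^cu)^p\wedge(dd^cv)^{m-p}\wedge\beta^{n-m}$, $(dd^cu)^m\wedge\beta^{n-m}$ and $(dd^cv)^m\wedge\beta^{n-m}$ are positive measures and $-h_j\uparrow1$, the monotone convergence theorem applied to each of the three integrals turns the inequality in (i) into (ii); the right-hand side stays finite because $u,v\in\cE_0(m)$, which then also forces the left-hand side of (ii) to be finite. I do not expect a genuine obstacle here: the only points requiring care are the verification that the truncations $h_j$ really lie in $\cE_0(m)$ — in particular that they have finite Hessian mass — and that monotone convergence is legitimately applied to all three integrals simultaneously, both of which are routine given the Chern-Levine-Nirenberg estimates and Proposition~\ref{pr-pr-3}.
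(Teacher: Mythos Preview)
Your argument is correct. Part~(i) is handled exactly as in the paper, by specializing Proposition~\ref{pr-pr-5} with $q=m-p$.

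For part~(ii) you follow the same overall scheme as the paper---approximate the constant weight $1$ from below by $-h$ for suitable $h$, apply (i), and pass to the limit---but with a different choice of weight. The paper takes $h$ to be the relative $m$-extremal function $h^*_{U_\varepsilon,\Omega}$ of the sublevel set $U_\varepsilon=\{\rho<-\varepsilon\}$; since this function need not be continuous, the paper must invoke Remark~\ref{pr-repr5} to justify that (i) still applies, and it then bounds the right-hand side directly by $-h^*_{U_\varepsilon,\Omega}\leq 1$ while letting $\varepsilon\to0$ only on the left-hand side. Your explicit truncation $h_j=\max\{j\rho,-1\}$ is continuous, so it lies in $\cE_0(m)$ without appeal to Remark~\ref{pr-repr5}, and monotone convergence handles all three integrals at once. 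Your route is slightly more elementary for exactly this reason; the paper's route has the advantage of introducing the extremal function, which is a natural object in the theory. Either way, the two arguments are minor variants of the same idea.
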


\begin{proof}
{\bf (i)}  follows from Proposition~\ref{pr-pr-5} when $u=u_1=...=u_p$, $v=v_1=...=v_q$.
{\bf (ii)} comes from the fact that for
$\rho$ a defining function of $\Omega$ we have
$$
	\int_\Omega(dd^cu)^p\wedge (dd^cv)^{m-p}\wedge\beta^{n-m}
		=\lim_{\varepsilon\rightarrow 0}\int_{\{\rho< -\varepsilon\}}
			(dd^cu)^p\wedge (dd^cv)^{m-p}\wedge\beta^{n-m},
$$
and
\begin{align*}
& \int_{U_\varepsilon}(dd^cu)^p\wedge (dd^cv)^{m-p}\wedge\beta^{n-m} \\
	&\leq \int_\Omega -h^*_{U_\varepsilon,\Omega}(dd^cu)^p
		\wedge (dd^cv)^{m-p}\wedge\beta^{n-m}\\
	&\leq\left[\int_\Omega -h^*_{U_\varepsilon,\Omega}(dd^cu)^m
		\wedge\beta^{n-m}\right]^\frac{p}{m}
	\left[ \int_\Omega -h^*_{U_\varepsilon,\Omega}(dd^cv)^m
		\wedge\beta^{n-m}\right]^\frac{m-p}{m}\\
	&\leq \left[\int_\Omega (dd^cu)^m\wedge\beta^{n-m}\right]^\frac{p}{m}
		\left[\int_\Omega (dd^cv)^m\wedge\beta^{n-m}\right]^\frac{m-p}{m},
\end{align*}
where $U_\varepsilon=\{\rho<-\varepsilon\}$ and
$ h_{U_\varepsilon,\Omega}
	= \sup\{u\in SH_m(\Omega);~ u\leq 0;~ u_{|_{U_\varepsilon}}\leq -1\}$.
It is clear that $-1\leq h^*_{U_\varepsilon,\Omega}\leq 0$,
$\lim_{\zeta\rightarrow\partial\Omega}h^*_{U_\varepsilon,\Omega}(\zeta)=0$ and
$\int_\Omega (dd^c h^*_{ U_\varepsilon,\Omega})^m\wedge \beta^{n-m}<+\infty$.
Hence the inequality (i) is still applicable by Remark~\ref{pr-repr5}.
\end{proof}

\subsection{ $m$-capacity, convergence theorems, the comparison principle}
\label{pr}
For $E$ a Borel set in $\Omega$ we define
$$
	cap_m(E,\Omega)
		= \sup \{ \int_E (dd^cu)^m\wedge \beta^{n-m},
			\; u\in SH_m(\Omega), \; 0\leq u\leq 1\}.
$$
In view of Proposition~\ref{cln}, it is finite as soon as
$E$ is relatively compact in $\Omega$. 
This is the version of the relative capacity in the case of
$m$-subharmonic functions. It is an useful tool to
establish convergent properties, especially the comparison principle.






\begin{theorem}[Convergence theorem]
\label{pr-th-9}
Let $\{u_k^j\}_{j=1}^{\infty}$, $k=1,...,m$ be locally uniformly bounded sequences of
$m$-subharmonic functions in $\Omega$,
$u_k^j\rightarrow u_k\in SH_m\cap  L^\infty(\Omega)$ in $\c_m$ as $j\rightarrow \infty$.
Then
$$
	\lim_{j\rightarrow\infty} dd^cu_1^j\wedge ....\wedge dd^cu_m^j\wedge\beta^{n-m}
		= dd^cu_1\wedge ...\wedge dd^c u_m\wedge\beta^{n-m}
$$
in the topology of currents.
\end{theorem}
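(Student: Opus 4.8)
The plan is to adapt, to the $m$-subharmonic setting, the classical argument for the continuity of mixed complex Monge-Amp\`ere masses along sequences converging in capacity; each step uses only facts already recorded (the Chern--Levine--Nirenberg inequalities of Proposition~\ref{cln}, the H\"older-type mixed inequalities of Corollary~\ref{pr-co-6}, and the definition of $\c_m$). First I reduce to perturbing a single potential: the difference
\[
  dd^cu_1^j\wedge\cdots\wedge dd^cu_m^j\wedge\beta^{n-m}-dd^cu_1\wedge\cdots\wedge dd^cu_m\wedge\beta^{n-m}
\]
telescopes as $\sum_{k=1}^{m}R_k^j$, where
\[
  R_k^j:=dd^cu_1^j\wedge\cdots\wedge dd^cu_{k-1}^j\wedge(dd^cu_k^j-dd^cu_k)\wedge dd^cu_{k+1}\wedge\cdots\wedge dd^cu_m\wedge\beta^{n-m},
\]
so it suffices to prove $R_k^j\to0$ weakly for each fixed $k$. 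Recall that $u_k^j\to u_k$ in $\c_m$ means $\c_m(\{z\in K:|u_k^j(z)-u_k(z)|>\delta\},\Omega)\to0$ for every compact $K\subset\subset\Omega$ and every $\delta>0$, and that, by hypothesis, $\sup_j\|u_l^j\|_{L^\infty(K)}<\infty$ for each $l$ and each $K\subset\subset\Omega$.

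Fix $k$ and put $S^j:=dd^cu_1^j\wedge\cdots\wedge dd^cu_{k-1}^j\wedge dd^cu_{k+1}\wedge\cdots\wedge dd^cu_m\wedge\beta^{n-m}$, a closed positive current of bidegree $(n-1,n-1)$ built from $m-1$ bounded potentials; it takes values in $\Gamma_m^*$, so that $\eta\wedge S^j\ge0$ for every $\eta\in\Gamma_m$ (see \eqref{pr1-1}, \eqref{pr1-2}). For a test function $\chi\in C_0^\infty(\Omega)$, integrating by parts (legitimate because $S^j$ has locally finite mass, cf.\ Proposition~\ref{cln}) gives
\[
  \int_\Omega\chi\,R_k^j=\int_\Omega(u_k^j-u_k)\,dd^c\chi\wedge S^j .
\]
Pick $C=C(\chi)>0$ large enough that the smooth $(1,1)$-forms $C\beta\pm dd^c\chi$ are pointwise positive definite, hence pointwise in $\Gamma_n\subset\Gamma_m$; then $(C\beta\pm dd^c\chi)\wedge S^j\ge0$, so $|dd^c\chi\wedge S^j|\le C\,\beta\wedge S^j$ as measures and
\[
  \Bigl|\int_\Omega\chi\,R_k^j\Bigr|\le C\int_{\operatorname{supp}\chi}|u_k^j-u_k|\,\beta\wedge S^j .
\]

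To conclude, fix $\varepsilon>0$ and an open set $\omega$ with $\operatorname{supp}\chi\subset\subset\omega\subset\subset\Omega$. The crucial estimate is the existence of a constant $C_0$, independent of $j$, with
\[
  (\beta\wedge S^j)(E)\le C_0\,\c_m(E,\Omega)\qquad\text{for every Borel }E\subset\subset\omega ;
\]
applied to a compact neighborhood of $\operatorname{supp}\chi$ in $\omega$ it gives $M:=\sup_j(\beta\wedge S^j)(\operatorname{supp}\chi)<\infty$. Splitting $\operatorname{supp}\chi$ according to whether $|u_k^j-u_k|\le\delta$ or $>\delta$ yields
\[
  \int_{\operatorname{supp}\chi}|u_k^j-u_k|\,\beta\wedge S^j\le\delta M+N_k\,C_0\,\c_m\bigl(\{|u_k^j-u_k|>\delta\}\cap\operatorname{supp}\chi,\ \Omega\bigr),
\]
where $N_k:=\sup_j\|u_k^j-u_k\|_{L^\infty(\omega)}<\infty$. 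First choose $\delta$ with $\delta M<\varepsilon/2$, then $j$ so large that the capacity term falls below $\varepsilon/(2N_kC_0)$ (possible since $u_k^j\to u_k$ in $\c_m$); this makes $|\int_\Omega\chi\,R_k^j|<C\varepsilon$. Hence $R_k^j\to0$ weakly, and summing the telescoping identity over $k$ proves the theorem.

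The main obstacle is the crucial estimate $(\beta\wedge S^j)(E)\le C_0\,\c_m(E,\Omega)$, i.e.\ the uniform absolute continuity of the mixed $m$-Hessian measures $\beta\wedge S^j$ with respect to $\c_m$. I would establish it by renormalizing every potential by a fixed affine map to take values in $[0,1]$ on $\omega$ (which only multiplies $\beta\wedge S^j$ by a constant independent of $j$), and writing $\beta=r\,dd^c(\|z\|^2/r)$ with $\|z\|^2/r\le1$ on $\Omega$, so that $\beta\wedge S^j$ becomes, up to that constant, a mixed Hessian measure with $m$ potentials all valued in $[0,1]$. The Borel-set form of the H\"older-type mixed inequality of Corollary~\ref{pr-co-6} then bounds its restriction to $E$ by $\c_m(E,\omega)$, and the comparability of the relative capacities $\c_m(\cdot,\omega)$ and $\c_m(\cdot,\Omega)$ on sets relatively compact in $\omega$ converts this into the required bound. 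All of these are $m$-subharmonic counterparts of standard pluripotential-theoretic facts, available through \cite{DK}.
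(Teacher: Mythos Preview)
The paper itself gives no proof: it simply refers the reader to Theorem~1.11 in \cite{K2}, of which the present statement is the $m$-subharmonic analogue. Your overall architecture (telescoping, integration by parts against $dd^c\chi$, the bound $|dd^c\chi\wedge S^j|\le C\,\beta\wedge S^j$ via $C\beta\pm dd^c\chi\in\Gamma_n\subset\Gamma_m$, and the $\delta$-splitting using convergence in $\c_m$) is exactly that standard argument and is correct.

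There is, however, a genuine gap in your justification of the ``crucial estimate'' $(\beta\wedge S^j)(E)\le C_0\,\c_m(E,\Omega)$. Corollary~\ref{pr-co-6} has no ``Borel-set form'': both of its inequalities require the potentials to lie in $\cE_0(m)$ (continuous on $\bar\Omega$, vanishing on $\partial\Omega$) and compare integrals over the whole of $\Omega$, not over an arbitrary Borel set $E$. Your potentials are only locally bounded $m$-sh functions with no boundary behavior prescribed, so Corollary~\ref{pr-co-6} is not applicable. The correct (and much simpler) argument bypasses Cegrell's inequalities entirely: after the affine renormalization so that the $m$ potentials $w_1,\dots,w_m$ entering $\beta\wedge S^j$ take values in $[0,1]$ on a neighborhood $\omega'\supset\supset\omega$, set $w=(w_1+\cdots+w_m)/m$; since every term in the multinomial expansion of $(dd^c(w_1+\cdots+w_m))^m\wedge\beta^{n-m}$ is a positive measure and the coefficient of $dd^cw_1\wedge\cdots\wedge dd^cw_m$ is $m!$, one gets
\[
  dd^cw_1\wedge\cdots\wedge dd^cw_m\wedge\beta^{n-m}
  \;\le\;\frac{m^m}{m!}\,(dd^cw)^m\wedge\beta^{n-m}
  \;\le\;\frac{m^m}{m!}\,\c_m(\,\cdot\,,\omega')
\]
on Borel subsets of $\omega$, directly from the definition of $\c_m$ (since $0\le w\le 1$ on $\omega'$). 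The remaining comparison $\c_m(E,\omega')\le C\,\c_m(E,\Omega)$ for $E\subset\subset\omega'\subset\subset\Omega$ is the standard localization property of the relative capacity (the $m$-sh analogue is in \cite{DK}); with this substitution your proof goes through and matches the argument the paper defers to.
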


\begin{proof}
See the proof of Theorem 1.11 in \cite{K2}.
\end{proof}

\begin{remark}
\label{pr-reth9}
One may prove as in Theorem 2.1 of \cite{BT2} that for $1\leq k\leq m$, let $u_1^j,...,u_k^j$
be decreasing sequences of locally bounded $m$-sh functions such that
$\lim_{j\rightarrow\infty}u^j_l(z)=u_l(z)\in SH_m\cap L^\infty_{loc}(\Omega)$
for all $z\in\Omega$ and $1\leq l\leq k$. Then
$$
	\lim_{j\rightarrow\infty} dd^cu_1^j\wedge ....\wedge dd^cu_k^j\wedge\beta^{n-m}
		= dd^cu_1\wedge ...\wedge dd^c u_k\wedge\beta^{n-m}
$$
in the sense of currents. Thus, the currents  obtained in the inductive definition
$(H_k)$ of the wedge product of currents associated to locally bounded
$m$-sh functions are closed positive currents.
\end{remark}

\begin{proposition}
\label{pr-pr-10}
If  $u_j\in  SH_m\cap  L^\infty(\Omega)$ is a sequence decreasing to a  bounded function
 $u$ in $\Omega$ then it converges to $u\in SH_m\cap  L^\infty(\Omega)$ with respect to
 $\c_m$. In particular, Theorem~\ref{pr-th-9} holds in this case.
\end{proposition}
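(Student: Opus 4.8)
The plan is to separate two claims: that the limit $u$ lies in $SH_m(\Omega)\cap L^\infty(\Omega)$, and that the convergence $u_j\to u$ takes place with respect to $\c_m$; the last assertion of the statement will then be a direct application of Theorem~\ref{pr-th-9}. The first claim is immediate: $u$ is a decreasing limit of $m$-subharmonic functions, hence $m$-subharmonic by Proposition~\ref{pr-pr-3}\eqref{pr-pr3-3}; and since $u\le u_j\le u_1$ with $u$ bounded below by hypothesis and $u_1\in L^\infty(\Omega)$, the sequence $\{u_j\}$ is locally uniformly bounded and $u\in L^\infty(\Omega)$.

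For the convergence in $\c_m$ one must show that for every $\delta>0$ and every $K\subset\subset\Omega$ one has $\c_m(\{z\in K: u_j(z)-u(z)>\delta\},\Omega)\to 0$ as $j\to\infty$. First I would fix $\delta$, $K$, and an open set $\omega$ with $K\subset\subset\omega\subset\subset\Omega$, and invoke the quasicontinuity of $m$-subharmonic functions with respect to $\c_m$ --- the transfer to the $m$-subharmonic case of the Bedford--Taylor quasicontinuity theorem, obtained by a verbatim repetition of the classical proof (cf. \cite{Bl}, \cite{DK}). Applied to the countable family $\{u\}\cup\{u_j\}_{j\ge 1}$ and combined with the subadditivity of $\c_m$, this produces, for any prescribed $\varepsilon>0$, an open set $U\subset\Omega$ with $\c_m(U,\Omega)<\varepsilon$ such that the restrictions of $u$ and of every $u_j$ to $\Omega\setminus U$ are continuous.

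Next, since $\overline{\omega}\setminus U$ is compact and the continuous functions $u_j$ decrease there pointwise to the continuous function $u$, Dini's theorem gives $u_j\to u$ uniformly on $\overline{\omega}\setminus U$; hence for all large $j$ one has $u_j-u\le\delta$ on $\overline{\omega}\setminus U$, so that $\{z\in K: u_j(z)-u(z)>\delta\}\subset U$ and $\c_m(\{z\in K: u_j(z)-u(z)>\delta\},\Omega)\le\c_m(U,\Omega)<\varepsilon$. As $\varepsilon$ is arbitrary, $u_j\to u$ in $\c_m$, and then Theorem~\ref{pr-th-9} applied with $u_1^j=\dots=u_m^j=u_j$ gives the convergence $(dd^cu_j)^m\wedge\beta^{n-m}\to(dd^cu)^m\wedge\beta^{n-m}$ asserted at the end. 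The only non-formal ingredient in this scheme is the quasicontinuity statement --- everything else is soft topology --- so that is where the (modest) work lies: one must check that the Bedford--Taylor quasicontinuity theorem goes through for $m$-subharmonic functions and the capacity $\c_m$, which it does by a verbatim reading of the classical argument.
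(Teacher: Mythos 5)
Your reduction of the statement to quasicontinuity plus Dini is logically clean as far as it goes (the subadditivity of $\c_m$ over the countable family, the Dini argument on $\overline{\omega}\setminus U$, and the observation that only the one-sided deviation $u_j-u>\delta$ matters are all fine), but it rests on a circular foundation. In the development this paper follows --- and in the source it cites, \cite{K2}, where the present statement is Proposition 1.12 and quasicontinuity is Theorem 1.13 --- the quasicontinuity theorem is itself \emph{deduced from} the convergence of decreasing sequences in capacity: one takes the standard regularizations $u\ast\rho_{1/j}\downarrow u$, uses Proposition~\ref{pr-pr-10} to get convergence in $\c_m$, extracts a subsequence converging uniformly off an open set of small capacity, and concludes that $u$ is continuous there. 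The same is true of the original Bedford--Taylor argument that you propose to transfer ``verbatim.'' So the ``only non-formal ingredient'' you defer to is precisely the result whose standard proof requires the proposition you are proving, and nothing in your write-up supplies an independent route to quasicontinuity.

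The non-circular proof is a direct estimate. For $v\in SH_m(\Omega)$ with $0\leq v\leq 1$ and $K\subset\subset\Omega$ one has
\begin{equation*}
\delta\int_{K\cap\{u_j-u\geq\delta\}}(dd^cv)^m\wedge\beta^{n-m}
\leq \int_{K}(u_j-u)\,(dd^cv)^m\wedge\beta^{n-m},
\end{equation*}
so it suffices to show that $\sup_v\int_K(u_j-u)(dd^cv)^m\wedge\beta^{n-m}\rightarrow 0$. This is done by induction on the number of factors $dd^cv$: after multiplying by a cutoff one integrates by parts to move $dd^c$ onto $u_j-u$, replacing a factor $dd^cv$ by $dd^cu_j$ (or $dd^cu$) at the cost of terms controlled, uniformly in $v$, by the Chern--Levine--Nirenberg inequalities (Proposition~\ref{cln}); the base case with no $dd^cv$ factors is monotone convergence. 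If you want to keep your quasicontinuity-plus-Dini scheme, you must first prove quasicontinuity by some argument that does not pass through Proposition~\ref{pr-pr-10}, and you have not indicated one.
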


\begin{proof}
See Proposition 1.12 in \cite{K2}.
\end{proof}

\begin{theorem}[Quasi-continuity]
\label{pr-th-11}
For a $m$-subharmonic function $u$ defined in $\Omega$ and for each
$\varepsilon>0$, there is an open subset $U$ such that
$\c_m(U,\Omega) < \varepsilon$ and $u$ is continuous in $\Omega \setminus U$.
\end{theorem}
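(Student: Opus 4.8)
The plan is to transcribe the classical Bedford-Taylor argument for quasi-continuity of plurisubharmonic functions; all the tools needed are already in place above (the Chern-Levine-Nirenberg inequalities, the stability of $SH_m$ under truncation and regularization, and the continuity of the Hessian operator along decreasing sequences), and $m$-subharmonicity only forces one to carry the fixed factor $\beta^{n-m}$ along.

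First I would localize and reduce to a bounded function. Exhaust $\Omega$ by open sets $\Omega_1\subset\subset\Omega_2\subset\subset\cdots$ with $\bigcup_k\Omega_k=\Omega$. Since $\c_m(\cdot,\Omega)$ is countably subadditive (each $(dd^cw)^m\wedge\beta^{n-m}$ is $\sigma$-subadditive and $\c_m$ is the supremum of these over $w\in SH_m(\Omega)$ with $0\le w\le 1$), it suffices to produce for each $k$ an open $U_k$ with $\c_m(U_k,\Omega)<\varepsilon 2^{-k}$ off which $u$ is continuous on $\Omega_k$, and then set $U:=\bigcup_k U_k$. Fix $k$, put $\Omega':=\Omega_{k+1}$, and normalize so that $u\le 0$ on $\Omega'$ (possible since $u$ is bounded above there). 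By Proposition~\ref{cln}(iii), applied with the $m$-subharmonic function $u$ against $(dd^cw)^m\wedge\beta^{n-m}$ for $0\le w\le 1$, one gets $\int_{\overline{\Omega_k}}(-u)(dd^cw)^m\wedge\beta^{n-m}\le C$ uniformly in $w$, whence the open set $\{u<-j\}\cap\Omega_k$ satisfies $\c_m(\{u<-j\}\cap\Omega_k,\Omega)\le C/j\to 0$. Enlarging $U_k$ by such a set and replacing $u$ by $\max(u,-j)$, which is $m$-subharmonic by Proposition~\ref{pr-pr-3} and coincides with $u$ off it, I may assume $u$ bounded on $\Omega'$.

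Next, for bounded $u$ I would pass to the regularizations $u_l:=u\ast\rho_{1/l}$, which are smooth, $m$-subharmonic (Proposition~\ref{pr-pr-3}), and decrease to $u$ on $\Omega'$. The crux is the claim that, for every $\delta>0$,
\[
\lim_{l\to\infty}\c_m\big(\{z\in\Omega':u_l(z)-u(z)>\delta\},\Omega\big)=0.
\]
Set $G_l:=\{z\in\Omega':u_l(z)-u(z)>\delta\}$; this is open since $u_l$ is continuous and $u$ upper semi-continuous. For any $w\in SH_m(\Omega)$ with $0\le w\le 1$, using $u_l-u\ge\delta$ on $G_l$ and $u_l-u\ge 0$ on $\Omega'$,
\[
\int_{G_l}(dd^cw)^m\wedge\beta^{n-m}\ \le\ \frac1\delta\int_{\Omega'}(u_l-u)(dd^cw)^m\wedge\beta^{n-m},
\]
so $\c_m(G_l,\Omega)\le\delta^{-1}\sup_{0\le w\le 1}\int_{\Omega'}(u_l-u)(dd^cw)^m\wedge\beta^{n-m}$, and it remains to show that this supremum tends to $0$. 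This is exactly the key estimate of the plurisubharmonic theory: it is obtained by an induction on the number of factors $dd^cw$, integrating by parts against the closed current $(dd^cw)^{m-1}\wedge\beta^{n-m}$ and using the uniform mass bounds of Proposition~\ref{cln} together with the continuity of the Hessian operator along the decreasing sequence $(u_l)$ (Proposition~\ref{pr-pr-10}, Theorem~\ref{pr-th-9}, Remark~\ref{pr-reth9}); the argument applies unchanged, so one may refer to \cite{BT2}, \cite{K2}.

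Finally I would patch by Borel-Cantelli. Given $\varepsilon>0$, the claim lets me choose $l_1<l_2<\cdots$ with $\c_m(G^{(i)},\Omega)<\varepsilon 2^{-i}$, where $G^{(i)}:=\{z\in\Omega':u_{l_i}(z)-u(z)>1/i\}$ is open; then $V:=\bigcup_i G^{(i)}$ is open, $\c_m(V,\Omega)<\varepsilon$ by subadditivity, and $0\le u_{l_i}-u\le 1/i$ on $\Omega'\setminus V$, so $u_{l_i}\to u$ uniformly there and $u|_{\Omega'\setminus V}$ is continuous. Adding to $V$ the set $\{u<-j\}\cap\Omega_k$ from the second paragraph produces the required $U_k$, and $U:=\bigcup_k U_k$ is the desired open set with $\c_m(U,\Omega)<\varepsilon$ on which $u$ is continuous. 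I expect the only genuine obstacle to be the uniform estimate in the third paragraph: since the mixed Hessian measures $(dd^cw)^m\wedge\beta^{n-m}$ are not dominated by Lebesgue measure, $L^1$-convergence of $u_l$ to $u$ does not suffice, and the Bedford-Taylor integration-by-parts induction has to be run carefully with the auxiliary form $\beta^{n-m}$ present throughout.
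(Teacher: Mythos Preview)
Your proposal is correct and follows exactly the route the paper has in mind: the paper's own proof consists solely of the reference ``See Theorem~1.13 in \cite{K2}'', i.e.\ it defers entirely to the Bedford--Taylor quasi-continuity argument for plurisubharmonic functions, and you have written out that argument with the factor $\beta^{n-m}$ carried along. The localization/exhaustion, the reduction to bounded $u$ via the CLN estimate, the key uniform bound $\sup_{0\le w\le 1}\int(u_l-u)(dd^cw)^m\wedge\beta^{n-m}\to 0$ proved by the integration-by-parts induction, and the Borel--Cantelli patching are precisely the steps of \cite{BT2,K2}, so there is nothing to add.
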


\begin{proof}
See Theorem 1.13 in \cite{K2}.
\end{proof}

From the quasi-continuity of $m$-subharmonic functions
one can derive several important results.
\begin{theorem}
\label{pr-th-12}
Let $u,v$ be locally bounded $m$-sh functions on $\Omega$.
Then we have an inequality of measures
$$
	(dd^c\max\{u,v\})^m\wedge\beta^{n-m}
		\geq {\bf 1}_{\{u\geq v\}}(dd^cu)^m\wedge\beta^{n-m}
			+{\bf 1}_{\{u<v\}}(dd^cv)^m\wedge\beta^{n-m}.
$$
\end{theorem}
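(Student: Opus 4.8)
The plan is to mimic the classical Bedford–Taylor argument (Theorem 7.1 in \cite{BT2}, or its streamlined version in \cite{K2}) adapted to the $\beta^{n-m}$ setting. First I would reduce to the case where $u$ and $v$ are continuous. Indeed, by Proposition~\ref{pr-pr3-3} we may regularize: let $u_\varepsilon = u\ast\rho_\varepsilon$, $v_\varepsilon = v\ast\rho_\varepsilon$, which are smooth $m$-sh functions decreasing to $u$, $v$ respectively. If the inequality is known for each pair $(u_\varepsilon, v_\varepsilon)$, then one passes to the limit using the convergence theorem for decreasing sequences (Remark~\ref{pr-reth9}, together with Proposition~\ref{pr-pr-10}): the left-hand side $(dd^c\max\{u_\varepsilon,v_\varepsilon\})^m\wedge\beta^{n-m}$ converges weakly to $(dd^c\max\{u,v\})^m\wedge\beta^{n-m}$ since $\max\{u_\varepsilon,v_\varepsilon\}\downarrow\max\{u,v\}$, and on the right one combines weak convergence of the Hessian measures with a lower-semicontinuity argument for the characteristic-function weights (the sets $\{u>v\}$ are handled by approximating ${\bf 1}_{\{u>v\}}$ from below by continuous functions supported there, and the boundary $\{u=v\}$ contributes nonnegatively).

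For the continuous case, the standard trick is to first prove the inequality on the open set $\{u\neq v\} = \{u>v\}\cup\{u<v\}$, where it is in fact an equality: on $\{u>v\}$ one has $\max\{u,v\}=u$ in a neighborhood of each point, so $(dd^c\max\{u,v\})^m\wedge\beta^{n-m}=(dd^cu)^m\wedge\beta^{n-m}$ there, and symmetrically on $\{u<v\}$. Thus the measure
\[
	\mu := (dd^c\max\{u,v\})^m\wedge\beta^{n-m}
		- {\bf 1}_{\{u> v\}}(dd^cu)^m\wedge\beta^{n-m}
			- {\bf 1}_{\{u<v\}}(dd^cv)^m\wedge\beta^{n-m}
\]
is supported on $\{u=v\}$. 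It remains to check $\mu\geq 0$. Since $\mu$ is concentrated on $\{u=v\}$, it suffices to show that on this set $\mu$ coincides with a nonnegative measure; the point is that $(dd^c\max\{u,v\})^m\wedge\beta^{n-m}$ restricted to $\{u=v\}$ dominates, say, ${\bf 1}_{\{u=v\}}(dd^cv)^m\wedge\beta^{n-m}$ — and by symmetry also the $u$-version — because $\max\{u,v\}\geq v$ with equality on that set, and one can run the local comparison: for $w=\max\{u,v\}$, $w=v$ on $\{u=v\}$, so a standard computation (using that $dd^c w = dd^c v$ in the sense of currents on the interior of $\{w=v\}$, plus a delicate analysis of the mixed terms on $\partial\{w=v\}$) gives $(dd^cw)^m\wedge\beta^{n-m}\geq {\bf 1}_{\{u=v\}}(dd^cv)^m\wedge\beta^{n-m}$ as measures. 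Combining this with the equality on $\{u\neq v\}$ yields the claim, once one notes ${\bf 1}_{\{u=v\}}$ can be split arbitrarily and the two right-hand contributions on $\{u=v\}$ overlap only there.

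A cleaner route, which I would actually follow, is to invoke quasi-continuity (Theorem~\ref{pr-th-11}) directly on the regularized functions combined with the convergence theorem (Theorem~\ref{pr-th-9}) in $\c_m$: approximate $u,v$ in capacity by continuous $m$-sh functions, observe the inequality is trivial when one of them is continuous and the other smooth on the relevant open pieces, and then pass to the limit using that both sides are continuous under $\c_m$-convergence of the arguments (left side via $\max$ being $\c_m$-continuous, right side via the weights converging appropriately off a set of small capacity, which carries no mass for the locally bounded Hessian measures by the Chern–Levine–Nirenberg bounds of Proposition~\ref{cln}).

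The main obstacle I anticipate is the behavior on the contact set $\{u=v\}$: controlling the mixed Hessian terms $dd^cu\wedge dd^cv\wedge(\cdots)\wedge\beta^{n-m}$ there and showing no negative mass appears. This is exactly where the Bedford–Taylor proof is subtlest; in the $m$-sh setting one must verify that the same cancellations hold after wedging with $\beta^{n-m}$, which is where the positivity of $\Gamma_m$ and the structure of $\Gamma_m^*$ from \eqref{pr1-2} enter. The remaining steps — regularization, decreasing-limit convergence, and the trivial equality off the contact set — are routine adaptations of the plurisubharmonic arguments in \cite{BT2} and \cite{K2}.
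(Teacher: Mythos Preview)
The paper gives no proof of its own here: it simply refers the reader to Theorem~6.11 in Demailly's lecture notes \cite{D1}. So there is no in-paper argument to compare against; the question is whether your sketch reproduces the standard (Demailly/Bedford--Taylor) proof that the citation points to.

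Your outline is in the right family, but it leaves the genuinely hard step unresolved. You correctly observe that on the open pieces $\{u>v\}$ and $\{u<v\}$ the inequality is an equality, and you correctly identify the contact set $\{u=v\}$ as the obstacle --- but you do not actually overcome it. The claim that ``$w=v$ on $\{u=v\}$ plus a delicate analysis of mixed terms on $\partial\{w=v\}$'' yields $(dd^cw)^m\wedge\beta^{n-m}\geq \mathbf{1}_{\{u=v\}}(dd^cv)^m\wedge\beta^{n-m}$ is precisely the statement to be proved, and your proposal gives no mechanism for it. Likewise, in your regularization route the passage to the limit on the right-hand side through the characteristic functions $\mathbf{1}_{\{u_\varepsilon\geq v_\varepsilon\}}$ is exactly where mass on the contact set can leak, and ``lower-semicontinuity of the weights'' is not enough without further input.

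The device you are missing, and which Demailly's proof uses, is an $\varepsilon$-shift that dissolves the contact set entirely. One replaces $v$ by $v-\varepsilon$: then $\{u\geq v\}\subset\{u>v-\varepsilon\}$, and on the (plurifine, or genuinely open in the continuous case) set $\{u>v-\varepsilon\}$ one has $\max\{u,v-\varepsilon\}=u$, so by locality of the Hessian operator in the plurifine topology (a consequence of quasi-continuity, Theorem~\ref{pr-th-11}) the Hessian measures agree there. This gives, for each $\varepsilon>0$,
\[
(dd^c\max\{u,v-\varepsilon\})^m\wedge\beta^{n-m}
\;\geq\; \mathbf{1}_{\{u\geq v\}}(dd^cu)^m\wedge\beta^{n-m}
+\mathbf{1}_{\{u<v-\varepsilon\}}(dd^cv)^m\wedge\beta^{n-m}.
\]
Since $\max\{u,v-\varepsilon\}\to\max\{u,v\}$ uniformly as $\varepsilon\to0$, the left side converges weakly by Theorem~\ref{pr-th-9}; on the right the first term is fixed and the second increases to $\mathbf{1}_{\{u<v\}}(dd^cv)^m\wedge\beta^{n-m}$ by monotone convergence. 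Testing against nonnegative $\phi$ and passing to the limit gives the theorem directly, with no separate analysis of $\{u=v\}$ and no need for your smoothing step.
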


\begin{proof}
See Theorem 6.11 in \cite{D1}.
\end{proof}

\begin{theorem}[Comparison principle]
\label{pr-th-13}
Let $\Omega$ be an open bounded subset of $\bC^n$.
For $u,v\in SH_m\cap  L^\infty(\Omega)$ satisfying
$\liminf_{\zeta\rightarrow z}(u-v)(\zeta)\geq 0$ for any $z\in \partial \Omega$, we have
$$
	 \int_{\{u<v\}} (dd^cv)^m\wedge\beta^{n-m}
	 	 \leq \int_{\{u<v\}} (dd^cu)^m\wedge\beta^{n-m}.
$$
\end{theorem}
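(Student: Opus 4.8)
The plan is to mimic the classical Bedford--Taylor proof of the comparison principle, adapted to the $m$-subharmonic setting using the tools already assembled in this section. First I would reduce to the case where the boundary behaviour is strict: replacing $v$ by $v-\delta$ for small $\delta>0$, it suffices to prove the inequality with $\{u<v\}$ replaced by $\{u<v-\delta\}$, a set which is relatively compact in $\Omega$ (more precisely, its closure stays away from $\partial\Omega$ on account of $\liminf_{\zeta\to z}(u-v)(\zeta)\ge 0$), and then let $\delta\downarrow 0$. This relative compactness is what lets us integrate by parts freely. I would also note that it is enough to prove
\begin{equation*}
  \int_{\{u<v\}}(dd^cv)^m\wedge\beta^{n-m}\le\int_{\{u<v\}}(dd^cu)^m\wedge\beta^{n-m}
\end{equation*}
under the additional assumption that $u,v$ extend continuously (or at least are nicely approximable) near the relevant region; the general case follows by the quasi-continuity Theorem~\ref{pr-th-11} together with the convergence Theorem~\ref{pr-th-9}.

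The core step uses $w:=\max\{u,v-\delta\}$. By Theorem~\ref{pr-th-12} we have on $\{u<v-\delta\}$ the identity $(dd^cw)^m\wedge\beta^{n-m}=(dd^cv)^m\wedge\beta^{n-m}$ as measures there (since on that open set $w=v-\delta$ and $dd^c$ is local), so
\begin{equation*}
  \int_{\{u<v-\delta\}}(dd^cv)^m\wedge\beta^{n-m}
  = \int_{\{u<v-\delta\}}(dd^cw)^m\wedge\beta^{n-m}
  \le \int_\Omega (dd^cw)^m\wedge\beta^{n-m} - \int_{\{u\ge v-\delta\}}(dd^cu)^m\wedge\beta^{n-m},
\end{equation*}
where the last inequality again invokes Theorem~\ref{pr-th-12} on the complementary set $\{u>v-\delta\}$ (the boundary set $\{u=v-\delta\}$ being handled by the usual trick of perturbing $\delta$ to avoid a null set, using that $\int_\Omega(dd^cu)^m\wedge\beta^{n-m}<\infty$ so only countably many level sets carry mass). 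Hence it remains to show $\int_\Omega(dd^cw)^m\wedge\beta^{n-m}\le\int_\Omega(dd^cu)^m\wedge\beta^{n-m}$, i.e. that replacing $u$ by the larger function $w$ (which equals $u$ near $\partial\Omega$) does not increase the total Hessian mass.

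That last inequality is the Stokes/integration-by-parts heart of the argument, and it is where I expect the main technical obstacle to lie. Since $w-u\ge 0$ has compact support in $\Omega$, one writes
\begin{equation*}
  \int_\Omega (dd^cw)^m\wedge\beta^{n-m} - \int_\Omega (dd^cu)^m\wedge\beta^{n-m}
  = \int_\Omega dd^c(w-u)\wedge\Big(\sum_{j=0}^{m-1}(dd^cw)^j\wedge(dd^cu)^{m-1-j}\Big)\wedge\beta^{n-m},
\end{equation*}
and each summand, after integrating by parts twice (legitimate because $w-u$ is compactly supported and the currents involved are closed positive of the right bidegree, by Remark~\ref{pr-reth9}), equals $\int_\Omega (w-u)\,dd^c\big[(dd^cw)^j\wedge(dd^cu)^{m-1-j}\wedge\beta^{n-m}\big]=0$ since those wedge products are \emph{closed} currents. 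Strictly speaking, to make the two integrations by parts rigorous one first regularizes $u$ and $w$ by the decreasing convolutions $u_\varepsilon=u\ast\rho_\varepsilon$, $w_\varepsilon=w\ast\rho_\varepsilon$ (Proposition~\ref{pr-pr-3}(3)), performs the manipulation for the smooth approximants where everything is a genuine differential form, and then passes to the limit using Proposition~\ref{pr-pr-10} and Theorem~\ref{pr-th-9}; the Chern--Levine--Nirenberg bounds of Proposition~\ref{cln} guarantee the requisite local uniform mass bounds so that no mass escapes. Assembling the three displays, letting $\delta\to 0$, and using that $\{u<v\}=\bigcup_{\delta>0}\{u<v-\delta\}$ with monotone convergence of the measures on the left, yields the claim.
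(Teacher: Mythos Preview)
Your approach is correct and is a genuinely different route from the paper's own proof. Briefly:

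\textbf{What the paper does.} The paper first treats the smooth case directly: for $u,v\in C^\infty$ with $\{u<v\}\subset\subset\Omega$ having smooth boundary, it sets $u_\varepsilon=\max\{u+\varepsilon,v\}$ and applies Stokes' theorem on $\partial\{u<v\}$ to obtain $\int_{\{u<v\}}(dd^cu_\varepsilon)^m\wedge\beta^{n-m}=\int_{\{u<v\}}(dd^cu)^m\wedge\beta^{n-m}$, then lets $\varepsilon\to 0$ using weak$^*$ lower semicontinuity. The general case is handled by quasi-continuity: one removes a small-capacity open set $U$ off which $u,v$ are continuous, regularizes to smooth $u_k,v_k$ converging uniformly on $\Omega\setminus U$, and runs a careful comparison of the sets $\{\tilde u+2\delta<\tilde v\}$, $\{u_k+\delta<v_k\}$, $\{\tilde u<\tilde v\}$ to transfer the smooth inequality across with controlled errors.

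\textbf{What you do.} You bypass the explicit smooth-boundary Stokes step and the delicate set-chasing by leaning on Theorem~\ref{pr-th-12}. Once the max inequality is available, the comparison principle reduces to the single statement that $\int_{\Omega'}(dd^cw)^m\wedge\beta^{n-m}=\int_{\Omega'}(dd^cu)^m\wedge\beta^{n-m}$ whenever $w\ge u$ are bounded $m$-sh and agree near $\partial\Omega'$; you prove this by telescoping and regularization. Two small remarks: (i) your parenthetical about perturbing $\delta$ to avoid mass on $\{u=v-\delta\}$ is unnecessary, since Theorem~\ref{pr-th-12} already gives $(dd^cw)^m\wedge\beta^{n-m}\ge\mathbf 1_{\{u\ge v-\delta\}}(dd^cu)^m\wedge\beta^{n-m}$ on the closed set; (ii) your displays involving $\int_\Omega(dd^cw)^m$ and $\int_\Omega(dd^cu)^m$ should be read over a relatively compact $\Omega'\supset\overline{\{u<v\}}$, since the full-$\Omega$ integrals need not be finite under the hypotheses.

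\textbf{Trade-offs.} Your argument is shorter and more conceptual, but it presupposes Theorem~\ref{pr-th-12}, whose proof (in Demailly) itself uses quasi-continuity; the paper's proof is more self-contained and makes the role of quasi-continuity explicit at the level of the comparison principle itself.
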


\begin{proof}
The proof follows the  lines of the proof of  Theorem 1.16 in  \cite{K2}.
First consider $u,v\in C^\infty(\Omega)$,
$E=\{u<v\}\subset\subset\Omega$, and smooth $\partial\Omega$.
In this case, put $u_\varepsilon=\max\{u+\varepsilon,v\}$ and use Stokes' theorem to get
\begin{equation}
\label{pr-th13-0}
\begin{aligned}
& \int_E(dd^cu_\varepsilon)^m\wedge\beta^{n-m}
	=\int_{\partial E} d^cu_\varepsilon\wedge (dd^c u_\varepsilon)^{m-1}\wedge\beta^{n-m} \\
	&= \int_{\partial E} d^cu\wedge (dd^c u)^{m-1}\wedge\beta^{n-m}
		=\int_{ E} (dd^c u)^{m}\wedge\beta^{n-m}
\end{aligned}
\end{equation}
(since $u_\varepsilon=u+\varepsilon$ on neighborhood of $\partial E$).
By Theorem~\ref{pr-th-9}, $(dd^cu_\varepsilon)^m\wedge\beta^{n-m}$
converges weakly$^*$  to $(dd^cv)^m\wedge\beta^{n-m}$ as
$\varepsilon\rightarrow 0$ on the open set $E$, it implies that
$$
	\int_E (dd^cv)^m\wedge\beta^{n-m}
		\leq \liminf_{\varepsilon\rightarrow \infty}
			\int_E(dd^cu_\varepsilon)^m\wedge\beta^{n-m}.
$$
This combining with \eqref{pr-th13-0} imply the statement.

For the general case, suppose $\|u\|, \|v\|<1$, fix $\varepsilon>0$ and $\delta>0$.
From the quasi-continuity, there is an open set $U$ such that
$cap_m(U,\Omega)<\varepsilon$ and $u=\tilde{u}$, $v=\tilde v$ on
$\Omega\setminus U$ for some continuous functions
$\tilde u$, $\tilde v$ in $\Omega$. Let $u_k$, $v_k$
be the standard regularizations of $u$ and $v$.
By Dini's theorem $u_k$ and $v_k$ uniformly converge  (correspondingly) to
$u$ and to $v$ on $\Omega\setminus U$. Then for $k>k_0$ big enough, subsets
$E(\delta):=\{\tilde u+\delta<\tilde v\}$ and $E_k(\delta):=\{u_k+\delta<v_k\}$ satisfy
\begin{equation}
\label{pr-th13-1}
 E(2\delta)\setminus U\subset\subset \bigcap_k E_k(\delta)\setminus U
	\;\; \text{ and } \;\; \bigcup_k E_k(\delta)\setminus U
			\subset\subset\{\tilde u<\tilde v\}.
\end{equation}
In what follows we shall often use the estimate
$$
	\int_U (dd^cw)^m\wedge\beta^{n-m}
		\leq cap_m(U,\Omega)<\varepsilon
		\;\; \text{ where } \;\; 0\leq w\leq 1,
$$
not mentioning this any more.
Since $\{u+2\delta<v\}=\{\tilde u+2\delta<\tilde v\}$ on $\Omega\setminus U$ ,
\begin{equation}
\label{pr-th13-2}
\begin{aligned}
 \int_{\{u+2\delta<v\}}(dd^cv)^m \wedge\beta^{n-m}
	&\leq\int_{\{\tilde u+2\delta<\tilde v\}\setminus U}
		(dd^cv)^m\wedge\beta^{n-m} +\varepsilon \\
	& = \int_{E(2\delta)\setminus U}
		(dd^cv)^m\wedge\beta^{n-m} +\varepsilon.
\end{aligned}
\end{equation}
Since $(dd^cv_k)^m\wedge\beta^{n-m}$ weakly$^*$ converges to
$(dd^cv)^m\wedge\beta^{n-m}$ and
$E(2\delta)$ is open and by \eqref{pr-th13-1} we get
\begin{equation}
\label{pr-th13-3}
\begin{aligned}
\int_{E(2\delta)}(dd^cv)^m\wedge\beta^{n-m}
	& \leq \liminf_{k\rightarrow\infty}\int_{E(2\delta)}
		(dd^cv_k)^m\wedge\beta^{n-m} \\
	& \leq \liminf_{k\rightarrow\infty}\int_{E_k(\delta)}
		(dd^cv_k)^m\wedge\beta^{n-m}+\varepsilon.
\end{aligned}
\end{equation}
Now, from Sard's theorem, we may assume that $E_k(\delta)$ has smooth boundary
(changing $\delta$ if needed), thus using the argument of the smooth case we have
\begin{equation}
\label{pr-th13-4}
\int_{E_k(\delta)}(dd^cv_k)^m\wedge\beta^{n-m}
	\leq \int_{E_k(\delta)}(dd^cu_k)^m\wedge\beta^{n-m}.
\end{equation}
Therefore, by \eqref{pr-th13-2}, \eqref{pr-th13-3} and \eqref{pr-th13-4}, we have
\begin{equation}
\label{pr-th13-5}
 \int_{\{u+2\delta<v\}}(dd^cv)^m\wedge\beta^{n-m}
 	\leq  \liminf_{k\rightarrow\infty}\int_{E_k(\delta)}
		(dd^cu_k)^m\wedge\beta^{n-m}+2\varepsilon.
\end{equation}
Furthermore, using \eqref{pr-th13-1} and the fact that
$(dd^cu_k)^m\wedge\beta^{n-m}$ weakly$^*$  converges to
$(dd^cu)^m\wedge\beta^{n-m}$ we obtain
\begin{equation}
\label{pr-th13-6}
\limsup_{k\rightarrow\infty}\int_{\overline{\cup_k E_k(\delta)\setminus U}}
	(dd^cu_k)^m\wedge\beta^{n-m}
\leq \int_{\overline{\cup_k E_k(\delta)\setminus U}}
		(dd^cu)^m\wedge\beta^{n-m}.
\end{equation}
Thus, from \eqref{pr-th13-1}, \eqref{pr-th13-5} and \eqref{pr-th13-6} one has
\begin{equation}
\label{pr-th13-7}
 \int_{\{u+2\delta<v\}}(dd^cv)^m\wedge\beta^{n-m}
 	\leq \int_{\{\tilde u<\tilde v\}}(dd^cu)^m\wedge\beta^{n-m}+3\varepsilon
	\leq \int_{\{u<v\}}(dd^cu)^m\wedge\beta^{n-m}+4\varepsilon.
\end{equation}
Finally, letting $\delta$ and $\varepsilon$ tend to $0$ in \eqref{pr-th13-7} the statement is proved.
\end{proof}

\begin{corollary}
\label{pr-co-14}
Under the assumption of Theorem~\ref{pr-th-13} we have
\begin{enumerate}
\item[(a)]
If $(dd^cu)^m\wedge\beta^{n-m}\leq (dd^cv)^m\wedge\beta^{n-m}$ then $v\leq u$,
\item[(b)]
If $(dd^cu)^m\wedge\beta^{n-m} = (dd^cv)^m\wedge\beta^{n-m}$
and $\lim_{\zeta\rightarrow z}(u-v)(\zeta)=0$ for $z\in\partial\Omega$ then $u=v$,
\item[(c)]
If $\lim_{\zeta\rightarrow \partial \Omega} u(\zeta)
	=\lim_{\zeta\rightarrow \partial \Omega} v(\zeta)=0$
and $u\leq v$ in $\Omega$, then
$$
	\int_\Omega (dd^c v)^m\wedge \beta^{n-m}
		\leq \int_\Omega (dd^cu)^m\wedge \beta^{n-m}.
$$
\end{enumerate}
\end{corollary}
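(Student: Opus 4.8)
All three assertions follow from the comparison principle, Theorem~\ref{pr-th-13}: the plan is to prove (a) first, deduce (b) at once, and obtain (c) by a separate approximation.

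For (a) I would use a strict-ellipticity perturbation. Fix $A>\sup_{\bar\Omega}\|z\|^2+1$ and, for $\varepsilon>0$, set $\phi_\varepsilon(z)=\varepsilon(\|z\|^2-A)$, so that $dd^c\phi_\varepsilon=\varepsilon\beta$ and $\phi_\varepsilon\le-\varepsilon$ on $\bar\Omega$; put $v_\varepsilon=v+\phi_\varepsilon\in SH_m\cap L^\infty(\Omega)$. Since $\phi_\varepsilon$ is continuous up to $\partial\Omega$ with $\phi_\varepsilon\le-\varepsilon$, the hypothesis $\liminf_{\zeta\to z}(u-v)(\zeta)\ge 0$ gives $\liminf_{\zeta\to z}(u-v_\varepsilon)(\zeta)\ge\varepsilon>0$ for $z\in\partial\Omega$; in particular $E_\varepsilon:=\{u<v_\varepsilon\}\subseteq\{u<v-\varepsilon\}$ is relatively compact in $\Omega$, so the masses below are finite by Proposition~\ref{cln}. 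Expanding $(dd^cv+\varepsilon\beta)^m\wedge\beta^{n-m}$ and discarding the nonnegative mixed terms,
\[
(dd^cv_\varepsilon)^m\wedge\beta^{n-m}\ \ge\ (dd^cv)^m\wedge\beta^{n-m}+\varepsilon^m\beta^n\ \ge\ (dd^cu)^m\wedge\beta^{n-m}+\varepsilon^m\beta^n .
\]
Theorem~\ref{pr-th-13} applied to $(u,v_\varepsilon)$ gives $\int_{E_\varepsilon}(dd^cv_\varepsilon)^m\wedge\beta^{n-m}\le\int_{E_\varepsilon}(dd^cu)^m\wedge\beta^{n-m}$, which combined with the last display forces $\varepsilon^m\int_{E_\varepsilon}\beta^n\le 0$; hence $E_\varepsilon$ is Lebesgue-null and $u\ge v_\varepsilon$ almost everywhere. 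As $u$ and $v_\varepsilon$ are subharmonic and the mean value of a subharmonic function over a ball decreases to its value at the centre, $u\ge v_\varepsilon$ holds everywhere in $\Omega$; letting $\varepsilon\to 0$ (so $\phi_\varepsilon(z)\to 0$ pointwise) gives $u\ge v$.

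Assertion (b) follows by applying (a) to both $(u,v)$ and $(v,u)$, which is legitimate because $\lim_{\zeta\to z}(u-v)(\zeta)=0$ and the two Hessian measures coincide; this gives $v\le u$ and $u\le v$. For (c) I first note that splitting $\Omega$ into $\{u<v\}$ and $\{u=v\}$ does not suffice: on $\{u=v\}$, Theorem~\ref{pr-th-12} only yields $(dd^cv)^m\wedge\beta^{n-m}\ge(dd^cu)^m\wedge\beta^{n-m}$, the wrong direction. Instead I would use $\phi_\delta:=\max\{u+\delta,v\}\in SH_m\cap L^\infty(\Omega)$ for $\delta>0$. Since $u,v\to 0$ at $\partial\Omega$, one has $u+\delta>v$ near $\partial\Omega$, so the set $\{v>u+\delta\}$ on which $\phi_\delta$ differs from $u+\delta$ is relatively compact in $\Omega$. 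Pick $\chi\in C_c^\infty(\Omega)$ equal to $1$ on a neighbourhood of $\overline{\{v>u+\delta\}}$; then $dd^c\chi$, and the complement of $\{\chi=1\}$, both lie in the open set where $\phi_\delta=u+\delta$ — where the currents $(dd^c\phi_\delta)^{m-1}\wedge\beta^{n-m}$ and $(dd^c(u+\delta))^{m-1}\wedge\beta^{n-m}$ agree as well. Testing $(H_m)$ against $\chi$ (one integration by parts) then rewrites $\int_\Omega\chi(dd^c\phi_\delta)^m\wedge\beta^{n-m}$ as an integral over the support of $dd^c\chi$ with $\phi_\delta$ replaced by $u+\delta$, and adding the part off $\{\chi=1\}$ yields
\[
\int_\Omega(dd^c\phi_\delta)^m\wedge\beta^{n-m}=\int_\Omega(dd^c(u+\delta))^m\wedge\beta^{n-m}=\int_\Omega(dd^cu)^m\wedge\beta^{n-m}.
\]
On the other hand $\phi_\delta\searrow\max\{u,v\}=v$ as $\delta\downarrow 0$, so by Proposition~\ref{pr-pr-10} the measures $(dd^c\phi_\delta)^m\wedge\beta^{n-m}$ converge weakly$^*$ to $(dd^cv)^m\wedge\beta^{n-m}$, and lower semicontinuity of mass under weak$^*$ convergence on the open set $\Omega$ gives
\[
\int_\Omega(dd^cv)^m\wedge\beta^{n-m}\ \le\ \liminf_{\delta\to 0}\int_\Omega(dd^c\phi_\delta)^m\wedge\beta^{n-m}\ =\ \int_\Omega(dd^cu)^m\wedge\beta^{n-m},
\]
which is (c).

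The step I expect to require the most care is the mass identity used in (c): that altering an $m$-subharmonic function away from $\partial\Omega$, keeping it $m$-subharmonic, leaves $\int_\Omega(dd^c\cdot)^m\wedge\beta^{n-m}$ unchanged. The cut-off $\chi$ must be chosen so that $dd^c\chi$ is supported in an \emph{open} set on which the two functions — and hence the currents $(dd^c\cdot)^{m-1}\wedge\beta^{n-m}$ — genuinely coincide, so that the integration by parts in $(H_m)$ may be run with either function; similarly, in (a) one must keep track of the relative compactness of $E_\varepsilon$ so that Proposition~\ref{cln} guarantees the masses are finite and may be cancelled. Apart from this bookkeeping the arguments are routine transcriptions of the plurisubharmonic case.
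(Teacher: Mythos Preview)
Your proofs of (a) and (b) are correct and are essentially the standard arguments (the paper simply cites \cite{K2}, Corollary~1.17, for these). For (c) your argument via $\phi_\delta=\max\{u+\delta,v\}$, the mass identity, and lower semicontinuity under weak$^*$ convergence is correct, but the paper takes a much shorter route: it applies Theorem~\ref{pr-th-13} directly to the pair $\big((1+\varepsilon)u,\,v\big)$. Since $u\le v\le 0$ in $\Omega$ (by the maximum principle for subharmonic functions) and both tend to $0$ at $\partial\Omega$, one has $(1+\varepsilon)u\le u\le v$ with strict inequality wherever $u<0$, so $\{(1+\varepsilon)u<v\}$ is all of $\Omega$ up to a set on which both Hessian measures vanish; the comparison principle then gives
\[
\int_\Omega(dd^cv)^m\wedge\beta^{n-m}\ \le\ (1+\varepsilon)^m\int_\Omega(dd^cu)^m\wedge\beta^{n-m},
\]
and letting $\varepsilon\to 0$ finishes. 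Your approach is sound, and the mass-preservation fact you isolate (that two bounded $m$-sh functions agreeing on a neighbourhood of $\partial\Omega$ have the same total Hessian mass over $\Omega$) is a useful lemma in its own right; but the paper's one-line scaling trick avoids the cut-off bookkeeping and the weak$^*$ limit entirely.
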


\begin{proof}
For (a) and (b) see Corollary 1.17 in \cite{K2}. For (c), let $\varepsilon>0$,
applying Theorem~\ref{pr-th-13} we have
$$
	 \int_\Omega (dd^c v)^m\wedge \beta^{n-m}
		\leq (1+\varepsilon)^n\int_\Omega (dd^cu)^m\wedge \beta^{n-m}.
$$
Then, letting $\varepsilon \rightarrow 0$ which gives the result.
\end{proof}

\bigskip

\section{Subsolution theorem}
\label{su}
\setcounter{equation}{0}
In this section we will prove our main theorem. The method we use here is similar to the
one from the proof of the  plurisubharmonic case (see \cite{K2}, Theorem 4.7). We first recall
the theorem due to Dinew and Ko\l odziej about the weak solution of the complex Hessian
equation with the  right hand side in $ L^p$ (see \cite{DK}, Theorem 2.10). From now on we
only consider $1<m<n$.
\begin{theorem}[\cite{DK}]
\label{su-th-0}
Let $\Omega$ be a  smoothly strongly $m$-pseudoconvex domain. Then for $p>n/m$,
$f\in  L^p(\Omega)$ and a continuous function $\varphi$ on $\partial\Omega$ there exists
$u\in SH_m(\Omega)\cap C(\bar\Omega )$ satisfying
$$
	(dd^cu)^m\wedge\beta^{n-m}=f\beta^n ,
$$
and $u=\varphi$ on $\partial\Omega$.
\end{theorem}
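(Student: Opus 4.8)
The plan is to transplant Ko\l odziej's scheme for the complex Monge-Amp\`ere equation (\cite{K2}) to the $m$-subharmonic setting, using the convergence theorem and the comparison principle of the previous section (Theorems~\ref{pr-th-9} and~\ref{pr-th-13} and Corollary~\ref{pr-co-14}) together with the solvability of the smooth problem. Since $\Omega$ is strongly $m$-pseudoconvex with a defining function $\rho$ as in Definition~\ref{pr-de-4}, for every $A>0$ the function $A\rho$ is a smooth $m$-sh function vanishing on $\partial\Omega$ with $(dd^c(A\rho))^m\wedge\beta^{n-m}\geq A^m\varepsilon\,\beta^n$; extending $\varphi$ to the maximal $m$-sh function $U_\varphi=\sup\{w\in SH_m(\Omega):w^*\leq\varphi\text{ on }\partial\Omega\}$, which by strong $m$-pseudoconvexity is continuous up to $\partial\Omega$ and equals $\varphi$ there, and using the inequality $(dd^c(u+v))^m\wedge\beta^{n-m}\geq(dd^cu)^m\wedge\beta^{n-m}$, one gets for any bounded right-hand side a subsolution with the prescribed boundary values. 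I would then pick $f_j\in C^\infty(\bar\Omega)$ with $f_j>0$, $f_j\to f$ in $L^p(\Omega)$ and $\|f_j\|_{L^p(\Omega)}\leq 2\|f\|_{L^p(\Omega)}$, and $\varphi_j\in C^\infty(\partial\Omega)$ with $\varphi_j\to\varphi$ uniformly, and invoke S.Y. Li's theorem (\cite{Li}) to obtain a unique smooth solution $u_j$ of $(dd^cu_j)^m\wedge\beta^{n-m}=f_j\beta^n$ with $u_j=\varphi_j$ on $\partial\Omega$.

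The core of the argument is the a priori estimates. Using the barriers $A\rho+U_{\varphi_j}$ and the comparison principle, one first proves a uniform bound $\|u_j\|_{L^\infty(\Omega)}\leq M$ with $M$ depending only on $\|f\|_{L^p(\Omega)}$, $\sup_{\partial\Omega}|\varphi|$ and $\Omega$, but not on $j$. The decisive analytic input is the comparison between the Lebesgue measure and the $m$-capacity due to Dinew and Ko\l odziej (\cite{DK}): together with H\"older's inequality it shows that for $p>n/m$ a measure $f\beta^n$ with $f\in L^p$ is dominated by a sufficiently fast-decaying function of $cap_m(\cdot,\Omega)$, which is exactly the condition under which Ko\l odziej's iteration produces an a priori sup-norm bound. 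Feeding the same capacity estimate through the comparison principle applied to the superlevel sets $\{u_j<u_k-s\}$ yields, in addition, a stability estimate of the form
$$
\|u_j-u_k\|_{L^\infty(\Omega)}\leq\sup_{\partial\Omega}|\varphi_j-\varphi_k|+C\bigl(\|f_j-f_k\|_{L^p(\Omega)}\bigr)^{\gamma}
$$
for suitable $\gamma=\gamma(n,m,p)>0$ and $C=C(n,m,p,\Omega)$.

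By this stability estimate the sequence $\{u_j\}$ is Cauchy in $C(\bar\Omega)$, hence converges uniformly to some $u\in C(\bar\Omega)$ which equals $\varphi$ on $\partial\Omega$ and, as a uniform limit of $m$-sh functions, lies in $SH_m(\Omega)$. Uniform convergence implies in particular convergence in $cap_m$, so Theorem~\ref{pr-th-9} gives $(dd^cu_j)^m\wedge\beta^{n-m}\to(dd^cu)^m\wedge\beta^{n-m}$ weakly; since also $f_j\beta^n\to f\beta^n$ weakly, the limit satisfies $(dd^cu)^m\wedge\beta^{n-m}=f\beta^n$, as required.

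I expect the main obstacle to be the a priori $L^\infty$ bound together with the stability estimate: both rely on the non-trivial comparison between Lebesgue measure and $m$-capacity for $m$-subharmonic functions, which is the essential new ingredient of Dinew and Ko\l odziej, and without which Ko\l odziej's pluripotential machinery does not transfer to the range $m<n$.
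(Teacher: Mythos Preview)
The paper does not prove this theorem at all: it is quoted as Theorem~2.10 of \cite{DK} and used as a black box in the proof of the subsolution theorem. So there is no ``paper's own proof'' to compare against; your sketch is an attempt to reconstruct the argument of Dinew and Ko\l odziej rather than something the present paper supplies.

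That said, your outline is essentially the expected one and matches the strategy of \cite{DK}: approximate $f$ and $\varphi$ by smooth positive data, solve the smooth Dirichlet problem via Li's theorem, and pass to the limit using an $L^\infty$ a priori bound and a stability estimate that both hinge on the volume--capacity inequality for $m$-subharmonic functions. You have correctly identified that this last inequality is the genuinely new analytic input, and that without it Ko\l odziej's iteration does not go through for $m<n$. The only places where your sketch is a bit thin are (i) the continuity up to the boundary of the maximal $m$-sh envelope $U_\varphi$, which requires a barrier argument using strong $m$-pseudoconvexity (this is where B\l ocki's results in \cite{Bl} enter), and (ii) the precise form of the stability estimate, whose proof is the real work in \cite{DK} and which you are simply asserting. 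But as a proof idea pointing to the right ingredients, it is accurate.
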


Let us state the subsolution theorem

\begin{theorem}
\label{su-th-1}
Let $\Omega$ be a smoothly strongly $m$-pseudoconvex domain in $\bC^n$,
and let $\mu$ be a finite  positive Borel measure in $\Omega$.
If there is a subsolution $v$, i.e
\begin{equation}
\label{su-th1-1}
\begin{cases}
	v \in  SH_m \cap  L^{\infty}(\Omega), \\
	(dd^cv)^m \wedge \beta^{n-m} \geq d\mu, \\
	\lim_{\zeta\rightarrow z} v(\zeta)=\varphi (z)
		\text{  for any  }   z\in\partial\Omega,
\end{cases}
\end{equation}
then there is a solution $u$ of the following Dirichlet problem
\begin{equation}
\label{su-th1-2}
 \begin{cases}
	u \in  SH_m\cap  L^{\infty}(\Omega), \\
	(dd^cu)^m \wedge \beta^{n-m} = d\mu, \\
	 \lim_{\zeta\rightarrow z} u(\zeta)
	 	=\varphi (z)  \text{  for any  }   z\in\partial\Omega.
\end{cases}
\end{equation}
\end{theorem}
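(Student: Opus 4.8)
The plan is to follow the strategy of Ko\l odziej's subsolution theorem in the pluripotential case (\cite{K2}, Theorem 4.7), replacing Bedford--Taylor theory by the $m$-subharmonic counterpart collected in Section~\ref{pr} and using Theorem~\ref{su-th-0} of Dinew--Ko\l odziej as the solvability input for measures with $L^p$ density, $p>n/m$. The first reduction is to normalize the boundary data: let $h\in SH_m(\Omega)\cap C(\bar\Omega)$ be the (unique) $m$-harmonic function with boundary values $\varphi$, obtained from Theorem~\ref{su-th-0} with $f\equiv 0$; replacing $u$ by $u-h$ and $v$ by $v-h$ we may assume $\varphi\equiv 0$, so both the subsolution and the sought solution lie in the class of bounded $m$-sh functions tending to $0$ at $\partial\Omega$, and $v\le 0$ by the comparison principle (Corollary~\ref{pr-co-14}(a)). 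We may also assume $v$ is not identically zero, otherwise $\mu=0$ and $u=0$ works.

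Next I would reduce to measures that are \emph{dominated by capacity}: the key analytic step is to show that for the given subsolution $v$ there is an increasing continuous function $F:\bR_+\to\bR_+$ with $F(0)=0$ such that
\begin{equation}
\label{prop-cap}
\mu(E)\le F\bigl(\c_m(E,\Omega)\bigr)
\end{equation}
for all Borel $E\subset\subset\Omega$. This is exactly where the Cegrell-type inequalities of Proposition~\ref{pr-pr-5} and Corollary~\ref{pr-co-6} enter: using $(dd^cv)^m\wedge\beta^{n-m}\ge d\mu$ together with the H\"older-type estimate of Corollary~\ref{pr-co-6}(i) applied to $v$ and to the relative extremal function $h_{E,\Omega}^*$ of a compact set $E$, one bounds $\mu(E)$ by a power of $\int_\Omega (dd^c h_{E,\Omega}^*)^m\wedge\beta^{n-m}=\c_m(E,\Omega)$, after first handling the case of bounded $v$ (as in \cite{K2}) by the standard splitting $v=\max\{v,-j\}+(\text{small in }L^1)$ and an application of the Chern--Levine--Nirenberg inequalities (Proposition~\ref{cln}). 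Once \eqref{prop-cap} is established, a routine approximation produces an increasing sequence $\mu_j=f_j\beta^n$ of measures with $f_j\in L^\infty\subset L^p(\Omega)$, $\mu_j\uparrow\mu$, and each $\mu_j$ also satisfying \eqref{prop-cap} with the same $F$; by Theorem~\ref{su-th-0} there are continuous solutions $u_j\in SH_m(\Omega)\cap C(\bar\Omega)$ with $(dd^cu_j)^m\wedge\beta^{n-m}=\mu_j$ and $u_j=0$ on $\partial\Omega$.

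The remaining work is to pass to the limit. Since $\mu_j\le\mu_{j+1}$, the comparison principle gives $u_1\ge u_2\ge\cdots$, and since $\mu_j\le (dd^cv)^m\wedge\beta^{n-m}$ we get $u_j\ge v$ by Corollary~\ref{pr-co-14}(a); hence $u:=\lim_j u_j$ is a bounded $m$-sh function with $v\le u\le 0$, so $u\in SH_m\cap L^\infty(\Omega)$ and $\lim_{\zeta\to z}u(\zeta)=0$ for $z\in\partial\Omega$. By Proposition~\ref{pr-pr-10} the decreasing convergence $u_j\to u$ holds in $\c_m$, so by the Convergence Theorem~\ref{pr-th-9} $(dd^cu_j)^m\wedge\beta^{n-m}\to(dd^cu)^m\wedge\beta^{n-m}$ weakly; on the other hand $\mu_j\uparrow\mu$ weakly, so $(dd^cu)^m\wedge\beta^{n-m}=\mu$. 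I expect the main obstacle to be the proof of the domination estimate \eqref{prop-cap} from the mere existence of a bounded subsolution: one must carefully use the Cegrell inequalities, which require the relevant functions to lie in $\cE_0(m)$ or at least in the class covered by Remark~\ref{pr-repr5}, and control the contribution of the set where $v$ is very negative by an $L^1$-type CLN bound; the uniformity of $F$ along the approximating sequence $\mu_j$, needed to keep the estimates from degenerating, is the delicate point. The continuity of $u_j$ and the weak stability of the Hessian measures then make the limiting argument straightforward.
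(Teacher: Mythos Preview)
Your proposal has two genuine gaps.

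The minor one is the opening reduction. Subtracting the $m$-maximal function $h$ with boundary values $\varphi$ does not reduce to $\varphi\equiv 0$: the operator $u\mapsto (dd^cu)^m\wedge\beta^{n-m}$ is fully nonlinear, $-h$ is not $m$-sh, so neither $v-h$ nor $u-h$ need lie in $SH_m(\Omega)$, and $(dd^c(u-h))^m\wedge\beta^{n-m}$ bears no simple relation to $(dd^cu)^m\wedge\beta^{n-m}$. The paper handles the boundary data differently: it first assumes $\varphi\in C^2$ and $\mu$ compactly supported, proves the theorem under these extra hypotheses, and removes them at the end by monotone limits.

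The serious gap is the approximation step. Even granting a domination estimate $\mu(E)\le C\,\c_m(E,\Omega)^{1/m}$ (which does follow from the Cegrell inequalities once the subsolution is normalized to have zero boundary limit and finite total Hessian mass), this does \emph{not} yield an increasing sequence $\mu_j=f_j\beta^n$, $f_j\in L^\infty$, with $\mu_j\uparrow\mu$. A measure dominated by $m$-capacity may be singular with respect to Lebesgue measure (e.g.\ a measure carried by a Cantor-type set of positive $m$-capacity but zero volume); for such $\mu$ no approximation from below by $L^p$ densities exists, and your monotone scheme collapses. This is precisely the obstacle Ko\l odziej's method is built to avoid. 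The paper regularizes the \emph{subsolution} rather than the measure: after arranging that $v$ is $m$-sh in a neighborhood of $\bar\Omega$ with $\nu=(dd^cv)^m\wedge\beta^{n-m}$ compactly supported, it writes $\mu=h\,\nu$ with $0\le h\le 1$, takes smooth $w_j\downarrow v$ so that $(dd^cw_j)^m\wedge\beta^{n-m}=g_j\beta^n$ with $g_j$ smooth, and sets $\mu_j=h g_j\beta^n$. Each $\mu_j$ has bounded density, so Theorem~\ref{su-th-0} applies, but the $\mu_j$ are \emph{not} monotone and neither are the resulting solutions $u_j$. The real work (Lemmas~\ref{su-le-2} and \ref{su-le-3}) is then an inductive argument over the mixed measures $(dd^cv_j)^{m-p}\wedge(dd^cv_k)^p\wedge\beta^{n-m}$, using the Cegrell inequalities only to get uniform total-mass bounds \eqref{su-le3-3}, which forces $u_j\to u=(\limsup u_j)^*$ in $\c_m$. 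That argument, not a domination-by-capacity estimate, is where Section~\ref{pr4} is actually used.
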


\begin{proof}
We first prove Theorem 2.1 under two extra assumptions:

1) the measure $\mu$ has compact support in $\Omega$;

2) the function $\varphi$ is in the class $C^2 .$

 Using the first of those conditions we  can modify $v$ so that
 $v$ is $m$-subharmonic in a neighborhood of $\Omega$ (and still is a subsolution).
 To do this take an open subset
 $supp~\mu \subset \subset U \subset \subset\Omega$ and consider the envelope
$$
	 \hat{v}= \sup\lbrace w\in  SH_m(\Omega): w\leq 0,~w
	 	\leq v ~\text{ on }~ U  \rbrace.
$$
Then from Proposition~\ref{pr-pr-3}-\eqref{pr-pr3-6} $\hat{v}^*$ is  a competitor in the definition of
the envelope, hence $\hat{v}=\hat{v}^*\in SH_m(\Omega)$. The balayage procedure implies that
$\hat{v}=v$ on $U$ and $\lim_{\zeta\rightarrow z} \hat{v}(\zeta)=0$ for any
$z\in\partial\Omega$ (the balayage  still works
as in the case of plurisubharmonic functions  by results in \cite{Bl}, Theorem 1.2, Theorem 3.7).
Thus,
$(dd^c\hat{v})^m\wedge\beta^{n-m}\geq d\mu$ as $supp~ \mu \subset \subset U$.
Next, take $\rho$ a defining  function of $\Omega$ which is smooth on a neighborhood
$\Omega_1$ of $\bar{\Omega}$ and $(dd^c\rho)^k\wedge\beta^{n-k}\geq \varepsilon\beta^n$,
 $1\leq k\leq m$, in $\bar{\Omega}$ for some
 $\varepsilon>0$. Since $\hat{v}$ is bounded we can further choose
 $\rho$ satisfying $\rho\leq\hat{v}$ on $\bar{U}$. Put
$$
v_1(z):=
\begin{cases}
	\max\{\rho(z),\hat{v}(z)\} &\text{on} \;\; \bar{\Omega}, \\
	\rho(z)			&\text{on} \;\; \Omega_1\setminus \bar{\Omega}.
\end{cases}
$$
Hence $v_1$ is  a subsolution which is defined and $m$-subharmonic
in a neighborhood of $\bar{\Omega }$.
We  still write  $v$ instead of $v_1$ in what follows. Furthermore,
 using the balayage procedure (as above)
one can make the support of $d\nu:=(dd^cv)^m\wedge\beta^{n-m}$ compact in $\Omega$.

Now, we can sketch the rest of the proof of the theorem.
We will approximate $d\mu$ by a sequence of
measures $\mu_j$ for which the Dirichlet problem is solvable
(using Theorem~\ref{su-th-0})  obtaining a
sequence of solutions $\{u_j\}$ corresponding to $\mu_j$.
Then we take a limit point $u$ of $\{u_j\}$ in
$ L^1(\Omega)$. Finally we show that $u_j\rightarrow u$ with respect to $\c_m$ in order to
conclude that $u$ is a  solution of \eqref{su-th1-2}.

By the Radon-Nikodym theorem $d\mu=hd\nu$, $0\leq h\leq 1$.
For the subsolution $v$ one can define the
 regularizing sequence $w_j\downarrow v$ in a neighborhood of the closure of $\Omega$.
 Let us write
 $(dd^cw_j)^m\wedge\beta^{n-m}=g_j\beta^n$, $\mu_j:=hg_j\beta^n$.
 Then by Proposition~\ref{pr-pr-10}
 $\lim_{j\rightarrow\infty} \mu_j=\mu$. As $\mu$ has compact support,
 so  $\mu_j$'s does. In particular,
 $hg_j\in  L^p(\Omega)$ for every  $p>0$.
 Therefore, applying Theorem~\ref{su-th-0} we have $u_j$ solving
\begin{equation}
\label{su-pf-1}
\begin{cases}
	u_j\in  SH_m(\Omega) \cap C(\bar{\Omega}), \\
	(dd^cu_j)^m\wedge\beta^{n-m}=\mu_j,\\
  	u_j (z)=\varphi (z)  \text{  for   }   z\in\partial\Omega.
\end{cases}
\end{equation}
Now we set $u=(\limsup u_j)^*$, and passing to a subsequence we assume that
$u_j$ converges to $u$ in $ L^1(\Omega)$.
From the definition of $w_j$ they are uniformly bounded.
Choosing a uniform constant $C$such that $w_j-C < \varphi$
on $\partial \Omega$, by Corollary~\ref{pr-co-14}-(a),
$w_j-C\leq u_j\leq\sup_{\bar\Omega}\varphi$. Thus, $\{u_j\} $ is uniformly bounded.
In particular, $u$ is also
bounded and now we shall check that
$\lim_{\Omega\ni\zeta\rightarrow z}u(\zeta)=\varphi(z)$ for every
$z\in\partial\Omega$. For this we only need $\varphi$ to be continuous.

Since $w_j$ converges uniformly to $v$ on $\partial\Omega$ and
$\partial\Omega$ is compact, given $\varepsilon>0$
we have  $|w_j-v|<\varepsilon$ on a small neighborhood of $\partial\Omega$
when $j$ big enough. Since $\varphi$ is continuous on $\partial\Omega$,
there is an approximant $g\in C^2(\overline{\Omega})$ of the continuous extension of
$\varphi$ such that $|g-\varphi|<\varepsilon$ on $\partial\Omega$.
For $A>0$ big enough, $A\rho+g$ is a  $m$-sh function.
By the comparison principle,
it implies that $w_j+A\rho+g-2\varepsilon\leq u_j$ on $\Omega$.
Then  $v+A\rho+\varphi-4\varepsilon\leq u_j$ on a small neighborhood of
$\partial\Omega$ for $j$ big enough.
Hence, $v+A\rho+\varphi-4\varepsilon\leq \liminf_{j\rightarrow \infty} u_j\leq u$
on a small neighborhood of $\partial\Omega$.
Because this is true  for  arbitrary $\varepsilon>0$, we obtain
 $\lim_{\zeta\rightarrow z}u(\zeta)=\varphi(z)$ for any $z\in\partial\Omega.$

The difficult part is to show that $u_j$ converges in $\c_m$ to $u$.
\begin{lemma}
\label{su-le-2}
The function $u$ defined above solves the Dirichlet problem \eqref{heq}
provided that for any $a>0$ and any compact $K\subset\Omega$ we have
\begin{equation}
\label{su-le2-1}
\lim_{j\rightarrow\infty} \int_{K\cap\{u-u_j\geq a\}}
		(dd^cu_j)^m\wedge\beta^{n-m}
		 = \lim_{j\rightarrow\infty}\mu_j(K\cap\{u-u_j\geq a\})
		 =0.
\end{equation}
\end{lemma}

\begin{proof}[Proof of Lemma~\ref{su-le-2}]
Using Theorem~\ref{pr-th-12}  we have
\begin{align*}
(dd^cu_j)^m \wedge \beta^{n-m}
	& = 1_{\{u-u_j\geq  a\}}(dd^cu_j)^m\wedge\beta^{n-m}
		+1_{\{u-u_j<a\}}(dd^cu_j)^m\wedge\beta^{n-m} \\
	&\leq  1_{\{u-u_j\geq a\}}\mu_j
		+ (dd^c\max\{u,u_j+a\})^m\wedge\beta^{n-m}.
\end{align*}
It follows that
\begin{equation}
\label{su-le2-2}
	\mu_j
		\leq 1_{\{u-u_j\geq a\}}\mu_j+(dd^c\max\{u-a,u_j\})^m\wedge\beta^{n-m}.
\end{equation}
Now, for any integer $s$ we may choose $j(s)$ such that $\mu_{j(s)}(\{u-u_{j(s)}\geq 1/s\})<1/s$.
From \eqref{su-le2-1} and \eqref{su-le2-2} we infer that
\begin{equation}
\label{su-le2-2'}
	\mu
		\leq \liminf_{s\rightarrow\infty} (dd^c\rho_s)^m\wedge\beta^{n-m},
\end{equation}
it means that $ \mu$ is less than any limit point of the right hand side,
where $\rho_s=\max\{u-1/s, u_{j(s)}\}$.
By the Hartogs lemma, $\rho_s\rightarrow u$ uniformly on any compact $E$
such that $u_{|_E}$ is continuous. So it follows from the quasi-continuity of
 $m$-sh functions that $\rho_s$ converges to $u$ in $\c_m$.
 Therefore, by Theorem~\ref{pr-th-9}
$(dd^c\rho_s)^m\wedge\beta^{n-m}\rightarrow (dd^cu)^m\wedge\beta^{n-m}$ as measures.
This combined with \eqref{su-le2-2'} implies
\begin{equation}
\label{su-le2-3}
	\mu\leq (dd^cu)^m\wedge\beta^{n-m}.
\end{equation}
For the reverse inequality, let
$\Omega_\varepsilon = \{z\in\Omega~; dist(z,\partial\Omega)<\varepsilon\}$.
We will show that for $\varepsilon>0$
\begin{equation}
\label{su-le2-3'}
\mu(\Omega)\geq \int_{\Omega_\varepsilon} (dd^cu)^m\wedge\beta^{n-m}.
\end{equation}
Indeed, firstly we note that $\rho_s=u_{j(s)}$ on a neighborhood of
$\partial\Omega_\varepsilon$ for $\varepsilon$ small enough
since $u-u_{j(s)}<1/s$ on $\partial\Omega$, $u-u_{j(s)}$ is upper semi-continuous
on $\Omega$ and $\partial\Omega$ is compact.
Hence, by the weak$^*$ convergence  $\mu_{j(s)}\rightarrow \mu$  and Stokes' theorem,
\begin{align*}
 \mu(\Omega)\geq \mu(\overline{\Omega_\varepsilon})
 	&\geq \limsup_{j(s)\rightarrow\infty} \mu_{j(s)}(\overline{\Omega_\varepsilon}) \\
 	&\geq \liminf_{j(s)\rightarrow\infty}\mu_{j(s)}(\Omega_\varepsilon) \\
 	&= \liminf_{j(s)\rightarrow\infty} \int_{\Omega_\varepsilon}
		(dd^cu_{j(s)})^m\wedge\beta^{n-m}
	  =\liminf_{j(s)\rightarrow\infty} \int_{\Omega_\varepsilon}
	  	(dd^c\rho_s)^m\wedge\beta^{n-m} \\
 	&\geq  \int_{\Omega_\varepsilon} (dd^cu)^m\wedge\beta^{n-m},
 \end{align*}
where in the last inequality we use  the weak$^*$ convergence
$(dd^c\rho_s)^m\wedge\beta^{n-m}\rightarrow (dd^cu)^m\wedge\beta^{n-m}$.
Therefore, \eqref{su-le2-3'}is proved. Let $\varepsilon\rightarrow 0$, then it implies
$\mu(\Omega)\geq (dd^cu)^m\wedge\beta^{n-m}(\Omega)$.
Thus the measures in \eqref{su-le2-3} are equal. The lemma follows.
\end{proof}

It remains to prove \eqref{su-le2-1} in Lemma 2.2 above.
It is a consequence of the following lemma.
\begin{lemma}
\label{su-le-3}
Suppose that there is a subsequence of $\{u_j\}$, still denoted by $\{u_j\}$, such that
\begin{equation*}
\label{su-le3-1}
	 \int_{\{u-u_j\geq a_0\}} (dd^cu_j)^m\wedge\beta^{n-m}>A_0, \;\; A_0>0, a_0>0.
\end{equation*}
Then, for $0\leq p \leq m$ there exist $a_p$, $A_p$, $k_1>0$ such that
\begin{equation}
\label{su-le3-1'}
	\int_{\{u-u_j
		\geq a_p\}} (dd^cv_j)^{m-p}\wedge (dd^cv_k)^{p}\wedge\beta^{n-m}>A_p,
		\;\;  j>k>k_1 ,
\end{equation}
for $v_j's$ the solutions  (from Theorem~\ref{su-th-0}) of the  Dirichlet problem
\begin{equation}
\label{su-le3-2}
\begin{cases}
	v_j\in  SH_m(\Omega)\cap C(\bar{\Omega}), \\
	(dd^cv_j)^m\wedge\beta^{n-m} = \nu_j \; (= g_j\beta^n),\\
	v_j(z)= 0 \;\; \text{ on } \;\; \partial\Omega.
\end{cases}
\end{equation}
Note that $\{v_j\}$ is uniformly bounded as a consequence of the uniform boundedness of
$\{w_j\}$ and Corollary~\ref{pr-co-14}-(a).
\end{lemma}

\begin{proof}[Proof of Lemma~\ref{su-le-3}]
We will prove it  by induction over $p$. For $p=0$ the statement holds by the hypothesis.
Suppose that \eqref{su-le3-1'} is true for $p<m$, we need to prove it for $p+1$.
The first observation is that if
$T(r,s):=(dd^cu_r)^q\wedge (dd^cv_s)^{m-q}\wedge\beta^{n-m}$
then there is a constant $C$ independent of $r,s$ such that
\begin{equation}
\label{su-le3-3}
\int_\Omega T(r,s)\leq C.
\end{equation}

Indeed, fix  a $C^2$ extension of $\varphi$ to a neighborhood of the closure of
$\Omega $. If $\rho$ is a defining function of $\Omega$, then there is a constant
$A>0$ such that $A\rho\pm\varphi\in SH_m(\Omega)$.
We shall check that $u_r+A\rho-\varphi$ belongs to $\cE_0(m)$.
It is enough to verify
$$
	\int_\Omega (dd^c(u_r+A\rho - \varphi))^m\wedge\beta^{n-m}<+\infty.
$$
In fact, from
$(dd^cu_r)^m\wedge \beta^{n-m}
	=hg_r \beta^n\leq  (dd^c (M_r\rho+\varphi))^m\wedge \beta^{n-m}$
for some $M_r>0$ and Corollary~\eqref{pr-co-14}-(a) we have
$u_r\geq M_r\rho +\varphi$ in $\Omega$.
Hence, $u_r+A\rho -\varphi\geq (M_r+A)\rho$ in $\Omega$.
Thus, by Corollary~\ref{pr-co-14}-(c)
$$
	\int_\Omega (dd^cu_r+A\rho-\varphi)^m\wedge \beta^{n-m}
		\leq \int_\Omega (dd^c (M_r+A)\rho)^m\wedge \beta^{n-m}<+\infty.
$$
Now, we note that $\mu_r(\Omega)$ and $\nu_s(\Omega)$ are bounded as
$\mu$ and $\nu$ have compact support.
Next, from Cegrell's inequalities, Corollary~\ref{pr-co-6}-(ii), for $1\leq k\leq m-1$, it implies
\begin{equation*}
\label{su-le3-3.1}
\begin{aligned}
\int_\Omega (dd^c(u_r+A\rho-\varphi))^k
	& \wedge(dd^c\rho)^{m-k}\wedge\beta^{n-m} \\
	&\leq \left[\int_\Omega (dd^c(u_r+A\rho-\varphi))^m\wedge\beta^{n-m}\right]^\frac{k}{m}
		\left[\int_\Omega(dd^c\rho)^m\wedge\beta^{n-m}\right]^\frac{m-k}{m}.
\end{aligned}
\end{equation*}
Hence,
\begin{equation}
\label{su-le3-3.2}
\begin{aligned}
I(r)	=      &\, \int_\Omega (dd^c(u_r+A\rho-\varphi))^m\wedge\beta^{n-m}\\
	\leq &\,  \int_\Omega (dd^cu_r)^m\wedge\beta^{n-m}
		+\int_\Omega (dd^c(A\rho-\varphi))^m\wedge\beta^{n-m}\\
	\;\; &\,+ C(A,\varphi)\sum_{k=1}^{m-1}\int_\Omega (dd^cu_r
		+A\rho -\varphi)^k\wedge(dd^c\rho)^{m-k}\wedge\beta^{n-m} \\
	\leq &\, \mu_r(\Omega)+C(A,\rho,\varphi)\\
	\;\; &\, +C(A,\varphi)\sum_{k=1}^{m-1}
			\left[\int_\Omega(dd^c(u_r	 + A\rho-\varphi))^m\wedge\beta^{n-m} \right]^\frac{k}{m}
		\left[\int_\Omega(dd^c\rho)^m\wedge\beta^{n-m} \right]^\frac{m-k}{m} \\
	\leq &\, \mu_r(\Omega)+C(A,\rho,\varphi)
		+C'(A,\varphi,\rho) \sum_{k=1}^{m-1}\left[I(r)\right]^\frac{k}{m}.
\end{aligned}
\end{equation}
Consider the two sides of the inequality \eqref{su-le3-3.2} as two positive functions in $r$.
$\mu_r(\Omega)$'s are bounded, and the degree of $I(r)$ on the right hand side is strictly
less than the degree of $I(r)$ on the left hand side, therefore $I(r)$ are bounded by
a constant independent of $r$. Again, by Cegrell's inequalities, Corollary~\ref{pr-co-6}-(ii),
as   $v_s$ obviously  belongs to $\cE_0(m)$,
\begin{equation*}
\label{su-le3-3.3}
\begin{aligned}
\int_\Omega T(r,s)
	&\leq \int_\Omega (dd^c(u_r+A\rho-\varphi))^q\wedge(dd^cv_s)^{m-q}\wedge\beta^{n-m}\\
	&\leq \left[\int_\Omega (dd^c(u_r+A\rho-\varphi))^m\wedge\beta^{n-m}\right]^\frac{q}{m}
		\left[ \int_\Omega (dd^cv_s)^m\wedge\beta^{n-m}\right]^\frac{m-q}{m}\\
	&\leq \left[I(r)\right]^\frac{q}{m}\left[\nu_s(\Omega)\right]^\frac{m-q}{m}\\
	&\leq C''(A,\varphi,\rho),
\end{aligned}
\end{equation*}
 because $I(r)$ and $\nu_s(\Omega)$ are bounded. Thus we have proved \eqref{su-le3-3}.
 We may assume that $-1<u_j, v_j<0$,
 because all functions $u_j,v_j$ are uniformly bounded by a constant independent of $j$,
 the estimates in the statement of Lemma~\ref{su-le-3} will only be changed by a uniformly
 positive constant. To simplify notations we set
 $S(j,k):=(dd^cv_j)^{m-p-1}\wedge (dd^cv_k)^p\wedge\beta^{n-m}$.
 Fix a positive  number $d>0$ (specified later in \eqref{su-le3-5})
 and recall that we need a uniform estimate from below for
 $\int_{\{u-u_j\geq d\}} dd^cv_k\wedge S(j,k)$.
 From the assumption on $u_j, v_j$, we have $u-u_j\leq {\bf 1}_{\{u-u_j \geq d\}}+d$.
 It follows that
\begin{equation*}
\label{su-le3-4'}
\begin{aligned}
J(j,k):=
	\int_\Omega (u-u_j)(dd^cv_k)\wedge S(j,k)
	& \leq \int_\Omega {\bf 1}_{\{u-u_j\leq d\}}dd^cv_k\wedge S(j,k)
		+ d\int_\Omega dd^cv_k\wedge S(j,k) \\
	& \leq \int_{\{u-u_j\geq d\}}dd^cv_k\wedge S(j,k) + d C,
\end{aligned}
\end{equation*}
where $C$ is from \eqref{su-le3-3}. Therefore
\begin{equation}
\label{su-le3-4}
\int_{\{u-u_j\geq d\}}dd^cv_k\wedge S(j,k)\geq J(j,k)- dC.
\end{equation}
The induction hypothesis says that there exist $a_p, A_p>0$ and $k_1>0$ such that
\begin{equation}
\label{su-le3-5}
  \int_{\{u-u_j\geq a_p\}}
  	(dd^cv_j)^{m-p}\wedge (dd^cv_k)^{p}\wedge\beta^{n-m}
  		>A_p, \;\;  j>k>k_1.
\end{equation}
We fix another small positive constant $\varepsilon>0$ and put
$J'(j,k):=\int_\Omega (u-u_j)dd^cv_j\wedge S(j,k)$.

{\it Claim.}
\begin{enumerate}
\item[(a)]  $J'(j,k)- J(j,k) \leq \varepsilon$, 
\item[(b)]  $J'(j,k)\geq a_pA_p -\varepsilon(1+C)$ for $j>k>k_2$.
\end{enumerate}

\begin{proof}[Proof of Claim]
{\bf (a)} By the quasi-continuity, we can choose an open set $U$ such that functions
 $u , v$ are continuous off the set $U$  and $\c_m(U,\Omega)<\varepsilon/2^{m+1}$.
 Then
 \begin{equation}
 \label{su-cl-1}
 	\int_U (dd^c(v_j+v_k))^m\wedge\beta^{n-m}
		<2^m cap_m(U,\Omega)
		<\varepsilon/2,
\end{equation}
\begin{equation}
\label{su-cl-2}
	\int_U (dd^c(u_j+v_k))^m\wedge\beta^{n-m}<\varepsilon/2.
\end{equation}
Therefore
\begin{equation}
\label{su-cl-3}
\begin{aligned}
J'(j,k)-J(j,k)
	& = \int_\Omega (u-u_j)dd^cv_j\wedge S(j,k)
		-\int_\Omega (u-u_j)dd^cv_k\wedge S(j,k) \\
	& = \int_\Omega v_jdd^c(u-u_j)\wedge S(j,k)
		- \int_\Omega v_kdd^c(u-u_j)\wedge S(j,k) \\
	& = \int_\Omega (v_j-v_k)dd^c(u-u_j)\wedge S(j,k)\\
	& = \int_{\Omega\setminus U} (v_j-v_k)dd^c(u-u_j)\wedge S(j,k)
		 + \int_U (v_j-v_k)dd^c(u-u_j)\wedge S(j,k)\\
	& \leq \int_{\Omega\setminus U} \|v_j-v_k\|dd^c(u+u_j)\wedge S(j,k)
		+ \int_Udd^c(u+u_j)\wedge S(j,k),
\end{aligned}
\end{equation}
where in the second equality we used the integration by parts formula twice with
$u=u_j=\varphi$, $v_j=0$ on the  boundary, and  in the  last  estimate we used the fact
$-1<u_j,v_j<0$. Since $v_j$ converges uniformly to $v$ on
$\Omega\setminus U$ one can find $l>k_1$ such that $\|v_j-v_k\|<\varepsilon/ 2C$ on
$\Omega\setminus U$ for $j>k>l>k_1$. This combined with \eqref{su-le3-3}, \eqref{su-cl-1}
and \eqref{su-cl-2}imply that each  of the integrals in the last line of \eqref{su-cl-3}
is at most $\varepsilon/2$.The first part of the claim follows.

{\bf (b)} We  first observe that from the upper bound of all $u_j$ (resp. $v_j$)
by $\sup\varphi$  (resp. $0$) on the boundary,  we have  for $k>k_2>l$,
in a neighborhood of $\partial \Omega$
 \begin{equation}
 \label{su-cl-4}
 v_k\leq v+\varepsilon \;\; \text{and} \;\; u_k\leq u+\varepsilon.
\end{equation}
Those inequalities are still valid (after increasing $k_2$) on
$\Omega\setminus U$  thanks to the Hartogs lemma.
Hence, using \eqref{su-le3-3}, \eqref{su-cl-1} and \eqref{su-cl-4} we have for $j>k>k_2$
\begin{align*} J'(j,k)
& = \int_\Omega (u-u_j)dd^cv_j\wedge S(j,k)\\
& \geq  a_p\int_{\{ u-u_j\geq a_p\}}dd^cv_j\wedge S(j,k)
	+ \int_{\{ u-u_j <a_p\}}(u-u_j)dd^cv_j\wedge S(j,k) \\
& =  a_p\int_{\{ u-u_j\geq a_p\}}dd^cv_j\wedge S(j,k)
	+ \int_{\{ u-u_j <a_p\}\cap(\Omega\setminus U)}(u-u_j)dd^cv_j\wedge S(j,k)\\
& \;\; + \int_{\{ u-u_j <a_p\}\cap U}(u-u_j)dd^cv_j\wedge S(j,k)\\
& \geq a_p\int_{\{ u-u_j\geq a_p\}}dd^cv_j\wedge S(j,k)
	- \varepsilon \int_{\Omega\setminus U}dd^cv_j\wedge S(j,k)
		-\int_Udd^cv_j\wedge S(j,k)\\
&\geq a_p A_p-\varepsilon(1+C).
\end{align*}

Thus the proof of the claim is finished.
\end{proof}

From {\it Claim} and \eqref{su-le3-4} we get
$$
	\int_{\{u-u_j\geq d\}}dd^cv_k\wedge S(j,k)
		\geq J(j,k)- dC\geq J'(j,k)-\varepsilon-dC\geq a_pA_p
			-\varepsilon(1+C)-\varepsilon-dC.
$$
If we take
\begin{equation}
\label{su-le3-5}
 a_{m+1}:=d=\frac{a_pA_p}{4C}\;\; \text{ and }\;\; \varepsilon\leq \frac{a_pA_p}{2(2+C)},
\end{equation}
then
$$
	\int_{\{u-u_j\geq d\}}dd^cv_k\wedge S
		\geq \frac{a_pA_p}{4}:=A_{p+1} \;\; \text{for} \;\; j>k>k_2,
$$
which finishes the proof of the inductive step and that of Lemma~\ref{su-le-3}.
\end{proof}

{\it End of the  proof of Theorem 2.1.} It is enough to prove the condition (2.4) in Lemma 2.2.
We argue by contradiction.  Suppose that it is not true.
Then the assumptions of Lemma 2.3 are valid and  its statement for
$p=m$ tells that for a fixed $k>k_1$
$$
	 \int_{\{u-u_j
	 	\geq a_m\}} (dd^cv_k)^m\wedge\beta^{n-m}
		>A_m \;\; \text{ when } \;\;  j>k.
$$
Thus
\begin{equation}
\label{su-pf-2}
 V(\{u-u_j\geq a_m\})
 	\geq \frac{1}{M_k}\int_{\{u-u_j\geq a_m\}}(dd^cv_k)^m\wedge\beta^{n-m}
	>\frac{A_m}{M_k}\;\; \text{for} \;\; j>k,
\end{equation}
because $(dd^cv_k)^m\wedge\beta^{n-m}=g_k\beta^n\leq M_k\beta^n$ for some $M_k>0$.
But \eqref{su-pf-2} contradicts the fact $u_j\rightarrow u$ in $ L^1_{loc}$,
i.e every subsequence of $\{u_j\}$ also converges to $u$ in $ L^1_{loc}$.
Thus, the theorem is proved under two extra assumptions.

{\it General case (we  remove two extra assumptions).} {\bf 1)}
Suppose that $\varphi\in C(\partial\Omega)$ and
the measure $\mu$ has compact support in $\Omega$.
We choose a decreasing sequence
$\varphi_k\in C^2(\partial\Omega)$ converging to $\varphi$.
Then we obtain a sequence of solutions $u_k$ satisfying
$$
\begin{cases}
	u_k\in  SH_m\cap  L^\infty(\Omega) , \\
	(dd^cu_k)^m\wedge\beta^{n-m}=\mu ,\\
	 \lim_{\zeta\rightarrow z} u_k(\zeta)
	 	=\varphi_k (z)  \text{  for any  }   z\in\partial\Omega.
\end{cases}
$$
It follows from the comparison principle, Corollary~\ref{pr-co-14}-(a),
that $u_k$ is decreasing and $u_k\geq v_0$ with $v_0$ a subsolution
without modifications. Set $u=\lim u_k$.
Then $u\geq v_0$ and $(dd^cu)^m\wedge\beta^{n-m}=\mu$ by Proposition~\ref{pr-pr-10}.
Thus, $u$ is the  required solution.

{\bf 2)} Suppose that $\mu$ is a finite positive Borel measure, $\varphi\in C(\partial\Omega)$.
Let $\chi_j$ be a non-decreasing sequence of cut-off functions
 $\chi_j\uparrow 1$ on $\Omega$. Since $\chi_j\mu$ have compact support in
 $\Omega$, one can find solutions corresponding to $\chi_j\mu$, the solutions will be
bounded from below by the given subsolution $v_0$ (from the comparison principle)
and they will decrease to the solution by the convergence theorem.
Thus we have proved Theorem~\ref{su-th-1}.
\end{proof}


%

\end{document}